\newtheorem{theorem}{Theorem}[section]
\newtheorem{lemma}[theorem]{Lemma}
\newtheorem{definition}[theorem]{Definition}
\newtheorem{proposition}[theorem]{Proposition}
\theoremstyle{remark}
\newtheorem{remark}[theorem]{Remark}
\newcommand{\var}{s}
\newcommand{\prior}{\nu_{0}}
\newcommand{\posterior}{\nu^{y}}
\newcommand{\posteriorh}{\nu_{h}^{y}}
\newcommand{\posteriorhat}{\widehat{\nu}^{y}}
\newcommand{\q}[1]{``#1''}
\DeclareMathOperator*{\argmax}{arg\,max}
\title{Bayesian parameter identification in impedance boundary conditions for Helmholtz problems}
\author[1]{Nick Wulbusch}
\author[2]{Reinhild Roden}
\author[2,3]{Matthias Blau}
\author[1]{Alexey Chernov}
\affil[1]{Institut für Mathematik,  Carl-von-Ossietzky Universität, Oldenburg, Germany}
\affil[2]{Institut für Hörtechnik und Audiologie, Jade-Hochschule, Oldenburg, Germany}
\affil[3]{Cluster of Excellence "Hearing4All"}
\affil[]{\textit{nick.wulbusch@uni-oldenburg.de}}
\date{}              
\begin{document}
	\maketitle
	\begin{abstract}
	We consider the problem of identifying the acoustic impedance of a wall surface from noisy pressure measurements in a closed room using a Bayesian approach. The room acoustics is modeled by the interior Helmholtz equation with impedance boundary conditions. The aim is to compute moments of the acoustic impedance to estimate a suitable density function of the impedance coefficient. For the computation of moments we use ratio estimators and Monte-Carlo sampling.
	We consider two different experimental scenarios. In the first scenario, the noisy measurements correspond to a wall modeled by impedance boundary conditions. In this case, the Bayesian algorithm uses a model that is (up to the noise) consistent with the measurements and our algorithm is able to identify acoustic impedance with high accuracy. In the second scenario, the noisy measurements come from a coupled acoustic-structural problem, modeling a wall made of glass, whereas the Bayesian algorithm still uses a model with impedance boundary conditions. In this case, the parameter identification model is inconsistent with the measurements and therefore is not capable to represent them well. Nonetheless, for particular frequency bands the Bayesian algorithm identifies estimates with high likelihood. Outside these frequency bands the algorithm fails. We discuss the results of both examples and possible reasons for the failure of the latter case for particular frequency values.
\end{abstract}
	
	\section{Introduction}

The acoustic properties of rooms (e.g. recording studios, lecture or concert halls) critically depend on the geometry of the room and acoustic properties of the materials the room boundaries are made of. For example, the size and shape of windows made of glass significantly impact acoustic properties of the room. In order to facilitate virtual room design and to ensure reliable simulations the precise identification of parameters, such as the acoustic impedance of the walls and other constituent materials is crucial. These characteristics are typically measured using standardized methods, e.g. an impedance tube \cite{iso_impedanceTube1_1996,iso_impedanceTube2_1998} or a reverberation chamber~\cite{iso_reverberationRoom_2003}.

Frequently it is not possible or not practical to examine isolated pieces of the material. In such a case, one may attempt to identify material parameters by measurements taken within the room itself. Moreover, the actual behavior of the material in the room might be significantly different from measurements done in isolation due to various coupling effects. For these reasons, methods to determine the acoustic impedance from measurements in the room are attractive.

In general, we can classify approaches for parameter identification in inverse problems into two types: classical deterministic and Bayesian approaches. The classical deterministic optimization-based approach involves utilizing an error function and employing methods such as least-squares, potentially with regularization, to solve for the parameter values. The Bayesian approach treats the parameters as random variables and naturally incorporates errors arising from one or more sources and deals with the corresponding uncertainties. 

Previous research in the classical context investigated the deterministic problem to identify the acoustic impedance from measurements in the room. In \cite{piechowicz2013determination} the authors solved the deterministic inverse problem with boundary element method and Nelder-Mead optimization algorithm. This lead to complex values for the acoustic impedance that best fitted the measured data. In \cite{Dutilleux.2002} an evolution strategy approach was used for the optimization of the impedance values and the absorption coefficient. Noise was also considered and good results were achieved if the Signal-to-Noise ratio (SNR) was large enough. In \cite{ANDERSSOHN.2007} the authors considered a related model, where the source of sound is coming from the vibrations of the structure, thus is included in the boundary condition instead of modeling it as a sound source in the interior. They use a genetic global optimization algorithm in combination with Newton's method, which lead to accurate estimations of the acoustic impedance. They however did not consider any noise.

In practice, measurements are typically perturbed by noise. The information about the noise is naturally built into the Bayesian approach, where the parameters are interpreted as random quantities. The goal of the computation is to approximately compute their statistics. The Bayesian approach \cite{stuart2010inverse, Dashti.2017} has several advantages compared to the classical approach. Firstly, the prior distribution serves as a form of regularization with clear interpretation. In contrast, regularization in the deterministic approach often is somewhat arbitrary \cite{stuart2010inverse}. Secondly, the Bayesian inference incorporates inevitable uncertainties directly and transparently \cite{Rappel.2020}. Furthermore, in the deterministic approach, the resulting parameter values are presented with respect to a shared residual value. In contrast, the Bayesian approach yields individual parameter estimates, accompanied by corresponding uncertainty descriptions for each parameter separately \cite{Rappel.2020}.

In this paper we consider the Bayesian approach to estimate the acoustic impedance on specific boundary regions in a room. For this, instead of computing a single (complex) value for the acoustic impedance we interpret the acoustic impedance as a random variable. We compute its statistical moments via ratio estimators and Monte-Carlo sampling to characterize the impedance. We consider the measurement setup of one sound source and several microphones at different positions, which represents the typical measurement procedure. The room acoustic is modeled using the interior Helmholtz equation, i.e., the time-harmonic wave equation, with impedance boundary conditions that represents locally reacting wall impedances. To solve the resulting partial differential equation, we employ the finite element method. We prove well-posedness of our problem and show convergence of the mean squared error for functionals of the acoustic impedance, which includes statistical moments of its real and imaginary part, using Monte Carlo sampling and ratio estimators. In the numerical experiments we consider two different scenarios. In the first scenario we utilize synthetic data generated using the same parametric model that is used for solving the inverse problem, i.e., impedance boundary conditions on the specific boundary regions. In this case the data are consistent with the parametric model and one would expect that the Bayesian algorithm will be able to recover correct parameter values. For this case we demonstrate convergence of the Bayesian algorithm numerically. In the second scenario the synthetic data are generated using a coupled acoustic-structural model where the boundary region is modeled as a glass wall, which is not locally reacting. Although in general the locally reacting impedance model does not describe the physics of the glass wall well, for some frequency intervals the Bayesian algorithm finds values of acoustic impedance having high likelihood. Outside these frequency ranges, however, the results did not represent the measurement well due to the model-data misfit. We investigate this somewhat surprising behaviour and explain it by the local structure of the eigenmodes and the distribution of the resonances.

We also point out that the Bayesian parameter identification for acoustic problems has been also applied in recent works \cite{JonasM.Schmid.,engel2019application}. The short note \cite{JonasM.Schmid.} contains a numerical study of a two-dimensional acoustic problem with a different type of excitation.
In \cite{engel2019application} the authors consider an acoustical problem of source detection. Here, the uncertainty lies in the source of the Helmholtz equation, i.e., the position, number and amplitude of the sources. The authors adopt a Bayesian framework, mathematically analyzed the problem and proved convergence of the sequential Monte Carlo method, a method to sample directly from the posterior. In our work we are considering the uncertainty in the boundary condition instead of the source and utilize ratio estimators to compute moments of the posterior instead of sequential Monte Carlo.

This paper is structured as follows. In section \ref{sec:preliminaries} we discuss the model Helmholtz equation and review some basic definitions, theorems regarding existence and regularity of the solution and error estimates. In  section \ref{sec:discretization} we discuss the discretization of the problem in a finite element setting and collect convergence estimates required in that follows. In section \ref{sec:bayesian} we introduce the Bayesian setting and analyze the convergence of the proposed algorithm. Section \ref{sec:numerics} shows the numerical experiments where the theoretical error bounds are confirmed if the data are consistent with the model. We also demonstrate the the behavior in the case of model-data misfit. Section \ref{sec:outlook} contains the outlook.

\section{Preliminaries}
\label{sec:preliminaries}
We consider an interior Helmholtz problem on a bounded convex polygonal or polyhedral domain $D\subset \mathbb{R}^{d}$, $d=2,3$. The boundary $\Gamma \coloneqq \partial D$ is decomposed into two disjoint open sets, $\Gamma_{N}$ and $\Gamma_{R}$, such that $\Gamma = \overline{\Gamma_{N}} \cup \overline{\Gamma_{R}}$. The complex-valued acoustic pressure is then modeled as solution of the problem
\begin{equation}
	\label{eq:model_problem}
	\left\{
	\begin{split}
		-\Delta p -k^2 p \quad=\quad f  &\quad\text{ in } D,\\
		\frac{\partial p}{\partial n} + \frac{i\omega \rho}{Z} p \quad=\quad 0 &\quad\text{ on } \Gamma_{R},\\
		\frac{\partial p}{\partial n} \quad=\quad 0 &\quad\text{ on } \Gamma_{N},
	\end{split}
	\right.
\end{equation}
with angular frequency $\omega$, density $\rho$, acoustic impedance $Z$ and wave number $k=\frac{\omega}{c}$, where $c$ is the speed of sound. Here, we assume that $Z$ is bounded with uniformly positive real part and piecewise constant on a fixed partition $\cup_{i=1}^{n} \overline{\Gamma_{R}^{(i)}} = \Gamma_{R}$, that is
\begin{equation}\label{eq:U}
	Z\in U \coloneqq \{Z \in L^{\infty}(\Gamma_{R}) ~\vert~ Z \equiv z_{i} \in \mathbb{C} \text{ on } \Gamma_{R}^{(i)} \text{ with } Re(z_{i}) \ge C > 0\}.
\end{equation}
Note that $Re(Z)\ne 0$ on some portion of the boundary is a sufficient condition for the well-posedness of \eqref{eq:model_problem}. However, in the practically relevant case of a passive surface absorbing energy $Re(Z)$ is strictly positive.


\subsection{Weak formulation}
The \textit{weak formulation} is obtained by multiplying the equation \eqref{eq:model_problem} by the complex conjugate of a test function $q\in H^{1}(D)$ and integrating over $D$. The weak formulation reads: Find $p\in H^{1}(D)$ such that for all $q \in H^{1}(D)$
\begin{equation}
	\label{eq:WeakFormulation}
	\int_{D} \nabla p \cdot \nabla \overline{q} \,dx +\int_{\Gamma_{R}} \frac{i\omega \rho}{Z} p \overline{q} \,ds - k^2 \int_{D} p\overline{q}\,dx = \int_{D} f \overline{q}\,dx.
\end{equation}
We define the sesquilinear form $a:H^{1}(D)\times H^{1}(D)\mapsto \mathbb{C}$ and antilinear form $\ell: H^{1}(D) \rightarrow \mathbb{C}$ as follows
\begin{align}
	\label{eq:SesquilinearForm}
	a(p,q) &\coloneqq \int_{D} \nabla p \cdot \nabla \overline{q} \,dx + \int_{\Gamma_{R}} \frac{i\omega \rho}{Z}  p \overline q\,ds - k^{2} \int_{D} p\overline{q}\,dx,\\
	\ell(q) &\coloneqq \int_{D} f \overline{q} \,dx.
\end{align}
The weak formulation \eqref{eq:WeakFormulation} is well-posed for $f\in L^{2}(D)$, see, e.g., \cite{chandler2012numerical}. However, in the experiments, a point source is assumed as the sound source. Therefore, the right-hand side $f$ is modeled as a Dirac delta distribution $\delta^{\var}$ located at a point $\var$ in the interior of $D$. The theory for the weak formulation does not guarantee well-posedness in this case, hence the so-called \q{very weak formulation} is used instead. Additionally, we define source and measurement domain that are distinct from each other, such that source and microphone positions are not too close to each other or to the boundary.

\begin{definition}[Source and measurement domain, cf. {\cite[Definition 2.5]{engel2019application}}]
	For $\kappa>0$ the source domain $D_{\kappa}\subset D$ is a set satisfying $\mathrm{dist}(D_{\kappa},\Gamma)>\kappa$. The measurement domain is defined as
	\begin{equation*}
		M_{\kappa} \coloneqq \{x \in D : \mathrm{dist}(x,D_{\kappa})>\kappa \text{ and } \mathrm{dist}(x,\Gamma)>\kappa\}.
	\end{equation*}	
\end{definition}

\subsection{Very weak formulation for point source excitations}
Let $s\in D_{\kappa}$ be the location of the point source. The \textit{very weak formulation} of \eqref{eq:WeakFormulation} with Dirac delta right hand side is given by: Find $G_{Z}^{\var} \in L^{2}(D)$ such that
\begin{equation}
	\label{eq:VeryWeakFormulation}
	\int_{D} G_{Z}^{\var} (-\Delta \overline{q})dx - k^{2}\int_{D} G_{Z}^{\var} \overline{q} dx = \overline{q}(\var),
\end{equation}
for all $q\in\{q\in H^{2}(D): \frac{\partial q}{\partial n} + \frac{i\omega \rho}{Z} q = 0 \text{ on } \Gamma_{R} \text{ and } \frac{\partial q}{\partial n} = 0 \text{ on } \Gamma_{N}\}$. Since the solution $G_{Z}^{s}$ depends on of $s$ and $Z$, we explicitly track this dependency in the notation. The right hand side of the equation comes as a result of the defining property of the Dirac delta distribution, i.e.,
\begin{equation*}
	\int_{D} \delta^{\var}(x) \overline{q}\,dx = \overline{q}(\var).
\end{equation*}

\begin{proposition}[cf. {\cite[Proposition 2.8]{engel2019application}}]
	\label{pro:VeryWeakSolutionExistence}
	The very weak formulation \eqref{eq:VeryWeakFormulation} has a unique solution $G_{Z}^{\var}\in L^{2}(D)$ for any $s \in D_{\kappa}$. Additionally there exists a constant $C_{\kappa}>0$ depending on $Z$ and $\kappa$ but not on $\var$ such that
	\begin{equation}
		\label{eq:Gxbounds}
		\lVert G_{Z}^{\var}\lVert_{H^{2}(M_{\kappa})}, \lVert G_{Z}^{\var} \rVert_{W^{2,\infty}(M_{\kappa})}\le C_{\kappa} \quad \text{ for all } \var \in D_{\kappa}.
	\end{equation}
	For $Z\in U$ the constant $C_{\kappa}$ can be chosen independently of $Z$. Furthermore, there exists a constant $\tilde{C}_{\kappa}>0$, such that for all $z\in M_{\kappa}$ it holds that
	\begin{equation*}
		\lvert G_{Z^{(1)}}^{\var}(z)-G_{Z^{(2)}}^{\var}(z)\rvert \le \tilde{C}_{\kappa} \lVert Z^{(1)} - Z^{(2)} \rVert_{L^{\infty}(\Gamma_{R})}
	\end{equation*}
	for all piecewise constant $Z^{(1)}, Z^{(2)}$ with $Re(Z^{(1)}),Re(Z^{(2)})\ge C >0.$
\end{proposition}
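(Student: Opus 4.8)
The plan is to prove the three assertions by a transposition (duality) argument that reduces everything to the well-posedness of the \emph{regular} weak problem \eqref{eq:WeakFormulation}, stated earlier, together with interior elliptic regularity. Writing $a_Z$ for the sesquilinear form \eqref{eq:SesquilinearForm} to stress its dependence on $Z$, I would first record that for every $g\in L^2(D)$ the problem $a_Z(\phi,\psi)=\int_D g\,\overline{\psi}\,dx$ for all $\psi\in H^1(D)$ has a unique solution $\phi=\phi_Z[g]\in H^1(D)$, and that by $H^2$-elliptic regularity on the convex polygonal/polyhedral domain $D$ one in fact has $\phi_Z[g]\in H^2(D)$ with $\|\phi_Z[g]\|_{H^2(D)}\le C\|g\|_{L^2(D)}$; moreover $\phi_Z[g]$ lies in the test space of \eqref{eq:VeryWeakFormulation} and satisfies $-\Delta\phi_Z[g]-k^2\phi_Z[g]=g$. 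Thus $g\mapsto\phi_Z[g]$ is a bounded bijection from $L^2(D)$ onto that test space. Since $d\le 3$, the embedding $H^2(D)\hookrightarrow C(\overline{D})$ shows that $g\mapsto\overline{\phi_Z[g](\var)}$ is a bounded antilinear functional on $L^2(D)$, so the Riesz representation theorem yields a unique $G_Z^{\var}\in L^2(D)$ with $\int_D G_Z^{\var}\,\overline{g}\,dx=\overline{\phi_Z[g](\var)}$ for all $g$. Choosing $g=-\Delta q-k^2q$ for $q$ in the test space recovers exactly \eqref{eq:VeryWeakFormulation}, and surjectivity of $g\mapsto\phi_Z[g]$ gives uniqueness. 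This step also records the identity $\int_D G_Z^{\var}\,\overline{g}\,dx=\overline{\phi_Z[g](\var)}$ that drives the remaining estimates.

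For the a priori bounds, the $L^2$ estimate is immediate: $\|G_Z^{\var}\|_{L^2(D)}=\sup_{\|g\|_{L^2}=1}|\phi_Z[g](\var)|\le C$ uniformly in $\var\in D_\kappa$, since the Sobolev and regularity constants do not see $\var$. For the higher norms I would use that on $M_\kappa$ the source point $\var\in D_\kappa$ is absent (by definition $\mathrm{dist}(z,\var)>\kappa$ for $z\in M_\kappa$) and $\mathrm{dist}(z,\Gamma)>\kappa$, so $G_Z^{\var}$ solves the homogeneous equation $-\Delta G_Z^{\var}-k^2 G_Z^{\var}=0$ in a $\kappa$-neighborhood of every $z\in M_\kappa$. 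Interior elliptic regularity (Caccioppoli plus bootstrapping) then bounds $\|G_Z^{\var}\|_{H^4(B)}$ on small balls $B\subset M_\kappa$ by $\|G_Z^{\var}\|_{L^2(D)}$; covering $M_\kappa$ by finitely many such balls and invoking $H^4(B)\hookrightarrow W^{2,\infty}(B)$ (again $d\le 3$) gives \eqref{eq:Gxbounds} with a constant depending only on $\kappa$ and on the well-posedness constant of $a_Z$. The uniformity over $Z\in U$ reduces to a uniform inf-sup bound for $a_Z$ over $U$, which I would obtain from the sign-definite damping $\mathrm{Im}\,\frac{i\omega\rho}{Z}=\frac{\omega\rho\,\mathrm{Re}(Z)}{|Z|^2}$ furnished by $\mathrm{Re}(Z)\ge C>0$ together with the boundedness built into \eqref{eq:U}.

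For the Lipschitz estimate, fix $\var$ and $z\in M_\kappa$ and set $W:=G_{Z^{(1)}}^{\var}-G_{Z^{(2)}}^{\var}$. As above $W$ solves the homogeneous Helmholtz equation near $z$, so interior regularity gives $|W(z)|\le C_\kappa\|W\|_{L^2(D)}$, and it suffices to estimate $\|W\|_{L^2(D)}$. Using the identity from the first step, for $g\in L^2(D)$ one has $\int_D W\,\overline{g}\,dx=\overline{(\phi_{Z^{(1)}}[g]-\phi_{Z^{(2)}}[g])(\var)}$, where $\phi_{Z^{(j)}}[g]$ are the primal solutions with the same right-hand side $g$ but boundary coefficient $Z^{(j)}$. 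The crucial point is that $\phi_{Z^{(1)}}[g]-\phi_{Z^{(2)}}[g]$ has \emph{vanishing interior residual} (both solve $-\Delta\phi-k^2\phi=g$), hence is smooth near the interior point $\var$, so interior regularity bounds its value at $\var$ by its $H^1(D)$-norm. That norm is controlled by subtracting the two weak problems: the forms differ only through the boundary term, so
\begin{equation*}
a_{Z^{(1)}}(\phi_{Z^{(1)}}[g]-\phi_{Z^{(2)}}[g],\psi)=-\int_{\Gamma_R} i\omega\rho\Big(\tfrac{1}{Z^{(1)}}-\tfrac{1}{Z^{(2)}}\Big)\,\phi_{Z^{(2)}}[g]\,\overline{\psi}\,ds ,
\end{equation*}
and with $\big|\tfrac{1}{Z^{(1)}}-\tfrac{1}{Z^{(2)}}\big|\le C^{-2}\,|Z^{(1)}-Z^{(2)}|$ (from $|Z^{(j)}|\ge\mathrm{Re}(Z^{(j)})\ge C$), the trace theorem and the inf-sup bound for $a_{Z^{(1)}}$ give $\|\phi_{Z^{(1)}}[g]-\phi_{Z^{(2)}}[g]\|_{H^1(D)}\le C\,\|Z^{(1)}-Z^{(2)}\|_{L^\infty(\Gamma_R)}\,\|g\|_{L^2(D)}$. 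Chaining these inequalities and taking the supremum over $\|g\|_{L^2}=1$ yields $\|W\|_{L^2(D)}\le\tilde{C}_\kappa\,\|Z^{(1)}-Z^{(2)}\|_{L^\infty(\Gamma_R)}$ and hence the claim.

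The routine ingredients here are the interior regularity estimates, used repeatedly. The two delicate points are, first, the $H^2$-regularity that makes the transposition operator $g\mapsto\phi_Z[g]$ a bijection onto the $H^2$ test space of \eqref{eq:VeryWeakFormulation}; this is where convexity of $D$ is essential, to exclude corner singularities below $H^2$. Second, and this is the real crux, is the $Z$-independence in the second assertion: one must verify that the inf-sup constant of $a_Z$ (and the associated $H^2$-shift constant) stays bounded away from zero uniformly over the whole family $U$. That relies on the uniform boundary damping described above and on keeping $|Z|$ controlled, and it is the step I would expect to require the most care.
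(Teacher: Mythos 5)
Your argument is correct in substance but follows a genuinely different route from the paper's. The paper constructs $G_Z^{\var}$ explicitly as $\Phi^{\var}+p_Z^{\var}$, where $\Phi^{\var}$ is the free-space fundamental solution and $p_Z^{\var}\in H^2(D)$ is a corrector solving the homogeneous Helmholtz equation with inhomogeneous impedance data $-\partial\Phi^{\var}/\partial n-\tfrac{i\omega\rho}{Z}\Phi^{\var}$; existence, the $H^2$/$W^{2,\infty}$ bounds on $M_\kappa$ and the Lipschitz estimate are then all read off from the explicit smoothness of $\Phi^{\var}$ away from $\var$ together with global shift estimates for the corrector, cited from Berm\'udez et al. You instead run a transposition argument: the primal solution operator $g\mapsto\phi_Z[g]$ maps $L^2(D)$ boundedly onto the constrained $H^2$ test space (this is where convexity enters, exactly as in the paper), Riesz representation produces $G_Z^{\var}$, and all pointwise information on $M_\kappa$ is recovered from interior elliptic regularity for the homogeneous equation on $\kappa$-balls, while the Lipschitz bound comes from the duality identity plus the perturbation equation $a_{Z^{(1)}}(\phi_{Z^{(1)}}[g]-\phi_{Z^{(2)}}[g],\psi)=-\int_{\Gamma_R}i\omega\rho\bigl(\tfrac{1}{Z^{(1)}}-\tfrac{1}{Z^{(2)}}\bigr)\phi_{Z^{(2)}}[g]\overline{\psi}\,ds$ (which, incidentally, handles the commutator term that the paper's displayed estimate glosses over). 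Your route avoids the fundamental solution entirely, so it would carry over to operators without an explicit $\Phi^{\var}$, at the price of the interior-regularity bootstrap; the paper's route gets $W^{2,\infty}(M_\kappa)$ more cheaply from the known singularity structure of $\Phi^{\var}$. One point you should tighten: you reduce the $Z$-uniformity of all constants to a uniform inf-sup bound for $a_Z$ over $U$ and attribute it to the sign-definite boundary damping, but damping alone only yields uniqueness; a quantitative inf-sup constant uniform over $U$ needs an extra step, e.g.\ Fredholm theory combined with a compactness and continuity argument over the finite-dimensional parameter set $\{(z_1,\dots,z_n):\mathrm{Re}\,z_i\ge C,\ \lvert z_i\rvert\le M\}$. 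The paper is no more explicit here---it appeals to \enquote{tracking the constants} in the cited reference and to the boundedness of $1/Z$---but since you correctly single this out as the crux, it deserves an actual argument rather than a remark.
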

\begin{proof}
	Let $\Phi^{\var}$ be the fundamental solution of the Helmholtz operator, that is
	\begin{equation}
		\label{eq:fundamentalSolution}
		\Phi^{\var}(z) \coloneqq \begin{cases}
			\dfrac{1}{2\pi} Y_{0}\left(k\lVert \var - z \rVert \right),& \text{ if } d = 2,\\[2ex]
			\dfrac{\exp\left(-k\lVert \var-z \rVert\right)}{4\pi \lVert \var - z \rVert},& \text{ if } d = 3,
		\end{cases}
	\end{equation}
	where $Y_{0}$ is the Bessel function of second kind and zero order, see e.g. \cite{dautray2012mathematical}.	Let $p_{Z}^{\var}$ be the unique weak solution of 
	\begin{alignat*}{8}
		-\Delta p_{Z}^{\var}- k^{2} p_{Z}^{\var} &&\quad = &\quad  0 &&\quad\text{ in } D,\\
		\frac{\partial p_{Z}^{\var}}{\partial n} + \frac{i\omega\rho}{Z} p_{Z}^{\var} &&\quad = &\quad -\frac{\partial \Phi^{\var}}{\partial n} - \frac{i\omega\rho}{Z}\Phi^{\var} &&\quad \text{ on } \Gamma_{R},\\
		\frac{\partial p_{Z}^{\var}}{\partial n} &&\quad = &\quad -\frac{\partial \Phi^{\var}}{\partial n} &&\quad\text{ on } \Gamma_{N},
	\end{alignat*}
	i.e., $p_{Z}^{\var}\in H^{1}(D)$ is the unique solution of
	\begin{align*}
		\int_{D} \nabla p_{Z}^{\var} \cdot \nabla \overline{q}\,dx + \int_{\Gamma_{R}}\frac{i\omega\rho}{Z}p_{Z}^{\var}\overline{q}d\Gamma_{R} &- k^{2} \int_{D}p_{Z}^{\var}\overline{q}\,dx\\
		& = -\int_{\Gamma_{R}} \left[ \frac{i\omega\rho}{Z}\Phi^{\var} + \frac{\partial \Phi^{\var}}{\partial n}\right]\overline{q}d\Gamma_{R} - \int_{\Gamma_N} \frac{\partial \Phi^{\var}}{\partial n} \overline{q}d\Gamma_{N}
	\end{align*}
	for all $q\in H^{1}(D)$. Thus, the solution $G_{Z}^{\var}$ of \eqref{eq:VeryWeakFormulation} can be written as
	\begin{equation}
		\label{eq:GxPhip}
		G_{Z}^{\var} = \Phi^{\var} + p_{Z}^{\var}.
	\end{equation}
	In \cite[Lemma 3.1, Theorem 3.3, Chapter 3.2]{bermudez2004finite}  the existence and uniqueness of $p_{Z}^{\var}$ and hence of $G_{Z}^{\var}$ was proven, by showing that $p_{Z}^{\var}$ satisfies G\aa rding's inequality and thus the problem satisfies the Fredholm alternative, i.e., existence is a consequence of uniqueness which was shown in \cite[Lemma 3.1]{bermudez2004finite}. Following \cite{bermudez2004finite}, since $D$ is convex, the $H^{2}$-estimate is given by
	\begin{equation*}
		\lVert p_{Z}^{\var} \rVert_{H^{2}(D)} \le C \left[\left\lVert \frac{\partial \Phi^{\var}}{\partial n} + \frac{i\omega\rho}{Z}\Phi^{\var}\right\rVert_{H^{\frac{1}{2}}(\Gamma_{R})} + \left\lVert \frac{\partial \Phi^{\var}}{\partial n}\right\rVert_{H^{\frac{1}{2}}(\Gamma_{N})}\right],
	\end{equation*}
	where the fundamental solution $\Phi^{\var}$ and its derivatives are bounded uniformly in $x$ for $\lVert x-\var \rVert>\kappa$, see \cite[proof of Theorem 2.8]{engel2019application}. The $W^{2,\infty}$-estimate $\lVert p_{Z}^{\var}\rVert_{W^{2,\infty}}(M_{\kappa})<C_{\kappa}$ was proven in \cite[Lemma 3.4]{bermudez2004finite}. Tracking the constants in the proof of \cite[Lemma 3.4]{bermudez2004finite} $C_{\kappa}$ can be chosen independently of $Z$ if $\frac{1}{Z}$ is bounded, which is the case for $Re(Z)\ge C >0$. 
	It remains to show the Lipschitz continuity in $Z$. The Sobolev emdebbing \cite[Theorem 4.12]{adams2003sobolev} implies that $C(M_{\kappa})\subset H^{2}(M_{\kappa})$ for $d = 2,3$. Thus, for $z\in M_{\kappa}$ 
	\begin{equation*}
		\lvert G_{Z^{(1)}}^{\var}(z) - G_{Z^{(2)}}^{\var}(z)\rvert \le \lVert G_{Z^{(1)}}^{\var} - G_{Z^{(2)}}^{\var}\rVert_{C(M_{\kappa})} \le \tilde{C} \lVert G_{Z^{(1)}}^{\var} - G_{Z^{(2)}}^{\var}\rVert_{H^{2}(M_{\kappa})} 
	\end{equation*}
	Now observe that for $Z^{(1)},Z^{(2)}\in \mathbb{C}$ with $Re(Z^{(1)}),Re(Z^{(2)})\ge C>0$ we have
	\begin{align*}
		\lVert G_{Z^{(1)}}^{\var} - G_{Z^{(2)}}^{\var} \rVert_{H^{2}(M_{\kappa})} &\le \lVert p_{Z^{(1)}}^{\var} - p_{Z^{(2)}}^{\var} \rVert_{H^{2}(D\backslash D_{\kappa})} + \lVert \Phi^{\var} - \Phi^{\var}\rVert_{H^{2}(D\backslash D_{\kappa})}\\
		&\le C_{1}\left[\left\lVert \frac{i\omega \rho}{Z^{(1)}} \Phi^{\var} + \frac{\partial \Phi^{\var}}{\partial n} - \left(\frac{i\omega \rho}{Z^{(2)}}\Phi^{\var}+\frac{\partial \Phi^{\var}}{\partial n}\right) \right\rVert_{H^{1/2}(\Gamma_{R})}\right]\\
		&= C_{1}  \left\lVert i\omega \rho \left( \frac{1}{Z^{(1)}} - \frac{1}{Z^{(2)}}\right)\Phi^{\var} \right\rVert_{H^{1/2}(\Gamma_{R})},
	\end{align*}
	where the right-hand side admits the following upper bound
	\begin{align*}
		C_{1}  \left\lVert i\omega \rho \left( \frac{1}{Z^{(1)}} - \frac{1}{Z^{(2)}}\right)\Phi^{\var} \right\rVert_{H^{1/2}(\Gamma_{R})}&\le C_{2} \lvert \omega \rho \rvert \lVert \Phi^{\var} \rVert_{H^{2}(D\backslash D_{\kappa})} \left\lVert \frac{1}{Z^{(1)}}-\frac{1}{Z^{(2)}} \right\rVert_{L^{\infty(\Gamma_{R})}}\\
		&\le C_{3} \left\lVert \frac{Z^{(2)}-Z^{(1)}}{Z^{(1)}Z^{(2)}}\right\rVert_{L^{\infty}(\Gamma_{R})}\\
		&\le  \tilde{C}_{\kappa} \left\lVert Z^{(1)} - Z^{(2)}\right\rVert_{L^{\infty}(\Gamma_{R})}.
	\end{align*}
\end{proof}

\section{Finite Element Discretization}
\label{sec:discretization}
In our approach, we discretize the problem using a quasi-uniform family of shape-regular triangulations $\{\mathcal{T}_{h}\}_{h>0}$ of $D$. Each element $T\in\mathcal{T}_{h}$ is a triangle or a tetrahedron with diameter $h_{T}$. The diameter of the largest ball contained in $T$ is denoted by $\rho_{T}$. The maximum diameter of all elements in $\mathcal{T}_{h}$ is the mesh size $h\coloneqq \max_{T\in \mathcal{T}_{h}} h_{T}$. We assume there exist constants $c_{1}, c_{2} >0$, such that for all $T \in \mathcal{T}_{h}$ and for all $h>0$
\begin{equation*}
	\frac{h_{T}}{\rho_{T}} \le c_{1}, \quad \frac{h}{h_{T}} \le c_{2}.
\end{equation*}
We then define the finite element space $V_{h}$ on the triangulation $\mathcal{T}_{h}$. The functions in $V_{h}$ are globally continuous on $D$ and piecewise linear in each $T\in\mathcal{T}_{h}$. The discrete weak formulation of our model problem \eqref{eq:model_problem} is: Find $p_{h} \in V_{h}$ such that 
\begin{equation}
	\label{eq:discrete_problem}
	a(p_{h},q_{h}) = \ell(q_{h}), \qquad q_{h} \in V_{h}
\end{equation}
where $a$ is given as in \eqref{eq:SesquilinearForm}. The antilinear functional
\[
\ell(q_{h}) = \overline{q}_{h}(\var)
\]
as in the very weak formulation \eqref{eq:VeryWeakFormulation}. Notice that even for the point source excitation $f$ the weak formulation \eqref{eq:discrete_problem} is well posed on $V_h$, and thus can be used for the numerical approximation of \eqref{eq:VeryWeakFormulation}. The following theorem states pointwise convergence of these approximations.

\begin{theorem}[cf. {\cite[Theorem 2.14]{engel2019application}}]
	\label{thm:DiscretisationErrorG}
	Let $Z\in U$. Let $G_{Z,h}^{\var}\in V_{h}$ be the discrete solution to \eqref{eq:discrete_problem} with $\ell(q) = \overline{q}(\var)$. Then for any $h\in(0,h_{0}]$, $z \in M_{\kappa}$ and $\var \in D_{\kappa}$ there exist $h_{0},C_{\kappa}>0$ such that
	\begin{equation*}
		\lvert G_{Z}^{\var}(z)-G^{\var}_{Z,h}(z)\rvert \le C_{\kappa}\lvert \ln h \rvert h^{2} 
	\end{equation*}
\end{theorem}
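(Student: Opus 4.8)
The plan is to evaluate the pointwise error by a duality (Aubin--Nitsche type) argument adapted to point evaluation, after first isolating the singular behaviour of the solution. By Proposition~\ref{pro:VeryWeakSolutionExistence} we have the splitting \eqref{eq:GxPhip}, $G_{Z}^{\var}=\Phi^{\var}+p_{Z}^{\var}$, where $\Phi^{\var}$ is the explicit fundamental solution, singular only at $\var$, and $p_{Z}^{\var}\in H^{2}(D)$ is regular and even $W^{2,\infty}$ on $M_{\kappa}$. The regular part is handled by a standard finite element analysis: its Galerkin approximation $p_{Z,h}^{\var}$ (driven by the smooth right-hand side appearing in the weak problem for $p_{Z}^{\var}$) admits, since $D$ is convex and hence $H^{2}$-regularity holds for the dual problem, an $O(h^{2})$ bound in $L^{\infty}(M_{\kappa})$ by the usual duality. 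This reduces the task to the singular part, i.e.\ to the pointwise finite element error of the point-load problem $-\Delta\Phi^{\var}-k^{2}\Phi^{\var}=\delta^{\var}$ evaluated at a point $z\in M_{\kappa}$ which, because $z\in M_{\kappa}$ and $\var\in D_{\kappa}$, lies at distance greater than $\kappa$ from the singularity $\var$ and from $\Gamma$.

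For this reduced problem I would exploit that $\Phi^{\var}$ is smooth (indeed $W^{2,\infty}$, with $\var$-uniform bounds from Proposition~\ref{pro:VeryWeakSolutionExistence}) in a fixed neighbourhood of $z$, while its singularity sits far away at $\var$. The mechanism is a Schatz--Wahlbin interior maximum-norm estimate realised through a regularized Dirac delta $\tilde\delta^{z}$ at $z$, equivalently a discrete Green's function defined by $a(v_{h},g_{h}^{z})=\overline{v_{h}(z)}$ for all $v_{h}\in V_{h}$. Writing $G_{Z,h}^{\var}(z)=a(G_{Z,h}^{\var},g_{h}^{z})$ and using the discrete equation \eqref{eq:discrete_problem} with $\ell(q)=\overline{q}(\var)$ against $g_{h}^{z}$ expresses the error in terms of Galerkin orthogonality of $g_{h}^{z}$; the resulting bound is $C\lvert\ln h\rvert$ times the local best $L^{\infty}$-approximation of the (smooth) exact Green's function near $z$, which is $O(h^{2})$, plus a global term measured in a negative norm. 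The logarithmic factor is precisely the familiar two-dimensional weight stemming from the $W^{1,1}$-norm of the Green's function (with the analogous estimate in three dimensions), and the global negative-norm term is again controlled to $O(h^{2})$ by a separate duality exploiting the convexity of $D$.

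The hard part, and the origin of the threshold $h_{0}$, is that the Helmholtz sesquilinear form $a$ is not coercive but only satisfies a G\aa rding inequality, so Galerkin orthogonality cannot be converted into an error bound through ellipticity alone. As in \cite[Theorem 2.14]{engel2019application}, one must invoke the discrete inf--sup stability that holds only for sufficiently fine meshes, $h\le h_{0}$, in order to control the dual finite element approximation, and then absorb the indefinite lower-order contribution $-k^{2}\int_{D}(G_{Z}^{\var}-G_{Z,h}^{\var})\overline{q}\,dx$ using the $L^{2}$- and negative-norm estimates on the error. Tracking the constants through this absorption, together with the $Z$-uniform bounds of Proposition~\ref{pro:VeryWeakSolutionExistence}, yields a constant $C_{\kappa}$ depending on $\kappa$ but independent of $h$ and $\var$, and hence the claimed estimate $\lvert G_{Z}^{\var}(z)-G_{Z,h}^{\var}(z)\rvert\le C_{\kappa}\lvert\ln h\rvert\,h^{2}$.

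I expect the main obstacle to be the simultaneous treatment of two difficulties: the singular point data, which forces the $\Phi^{\var}+p_{Z}^{\var}$ decomposition and an interior-estimate viewpoint because $G_{Z}^{\var}\notin H^{1}(D)$ globally, and the sign-indefiniteness of the Helmholtz form, which rules out a direct coercivity-based duality and necessitates the small-mesh threshold and the careful Schatz-type absorption argument. Sharpness of the $\lvert\ln h\rvert$ factor, rather than a cruder power of $h$, is the delicate quantitative point.
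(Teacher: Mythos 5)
The paper does not actually reprove this theorem: its ``proof'' is a one-line citation to the appendix of \cite{engel2019application}, supplemented only by the observation that the required $Z$-uniform bound $\lVert G_{Z}^{s}\rVert_{W^{2,\infty}(M_{\kappa})}\le C_{\kappa}$ is supplied by Proposition~\ref{pro:VeryWeakSolutionExistence}. Your sketch is a faithful reconstruction of the cited argument --- the $\Phi^{s}+p_{Z}^{s}$ splitting, Schatz--Wahlbin interior maximum-norm estimates producing the $\lvert\ln h\rvert$ factor, a global duality argument using convexity of $D$, and discrete inf-sup stability for $h\le h_{0}$ to compensate for the merely G\aa rding-type form --- and it correctly pinpoints the uniform $W^{2,\infty}(M_{\kappa})$ bound as the one ingredient this paper had to establish itself.
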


\begin{proof}
	See appendix of \cite{engel2019application}. The proof requires the bound $\lVert G_{Z}^{\var} \rVert_{W^{2,\infty}(M_{\kappa})} < C_{\kappa}$ independently of $Z$, which has been shown in Proposition \ref{pro:VeryWeakSolutionExistence}.
\end{proof}

\begin{proposition}
	\label{cor:contGhinZ}
	There exist $h_{0}, C_{\kappa}>0$ such that for all $z\in M_{\kappa}$ and for all $h\in \left(\left.0,h_{0}\right]\right.$ the discrete solution mapping $G_{\cdot,h}^{\var}(z) : U \rightarrow \mathbb{C}$ is Lipschitz continuous in $Z$, that is
	\begin{equation*}
		\lvert G_{Z_{1},h}^{\var}(z) - G_{Z_{2},h}^{\var}(z)\rvert \le C_{\kappa} \lVert Z_{1} - Z_{2}\rVert_{L^{\infty}(\Gamma_{R})}.
	\end{equation*}
\end{proposition}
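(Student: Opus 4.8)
The plan is to exploit that $Z$ enters the sesquilinear form \eqref{eq:SesquilinearForm} only through the boundary integral over $\Gamma_{R}$. Writing $a_{Z}$ for the form \eqref{eq:SesquilinearForm} with impedance $Z$, both discrete solutions satisfy $a_{Z_{1}}(G_{Z_{1},h}^{\var},q_{h}) = \overline{q_{h}}(\var)$ and $a_{Z_{2}}(G_{Z_{2},h}^{\var},q_{h}) = \overline{q_{h}}(\var)$ for all $q_{h}\in V_{h}$. Since $a_{Z_{1}}(p,q)-a_{Z_{2}}(p,q) = \int_{\Gamma_{R}} i\omega\rho(1/Z_{1}-1/Z_{2})\,p\overline{q}\,ds$, subtracting the two identities shows that $e_{h}\coloneqq G_{Z_{1},h}^{\var}-G_{Z_{2},h}^{\var}\in V_{h}$ is the Galerkin solution, with respect to the single form $a_{Z_{2}}$, of an auxiliary problem whose right-hand side is the boundary functional $F(q_{h})\coloneqq -\int_{\Gamma_{R}} i\omega\rho(1/Z_{1}-1/Z_{2})\,G_{Z_{1},h}^{\var}\,\overline{q_{h}}\,ds$. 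Because $Re(Z_{i})\ge C>0$ gives $\|1/Z_{1}-1/Z_{2}\|_{L^{\infty}(\Gamma_{R})}\le C^{-2}\|Z_{1}-Z_{2}\|_{L^{\infty}(\Gamma_{R})}$, the data of this auxiliary problem scale linearly in $\|Z_{1}-Z_{2}\|_{L^{\infty}(\Gamma_{R})}$, and the whole task reduces to a pointwise a priori estimate for $e_{h}$ at $z\in M_{\kappa}$ that is uniform in $h$.

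I stress that the naive route, namely the triangle inequality $\lvert e_{h}(z)\rvert \le \lvert G_{Z_{1},h}^{\var}(z)-G_{Z_{1}}^{\var}(z)\rvert + \lvert G_{Z_{1}}^{\var}(z)-G_{Z_{2}}^{\var}(z)\rvert + \lvert G_{Z_{2}}^{\var}(z)-G_{Z_{2},h}^{\var}(z)\rvert$, does not suffice: the two discretisation terms are controlled by Theorem \ref{thm:DiscretisationErrorG} only by $C_{\kappa}\lvert\ln h\rvert h^{2}$, an additive contribution that does not scale with $\|Z_{1}-Z_{2}\|_{L^{\infty}(\Gamma_{R})}$ and hence ruins the Lipschitz estimate as $Z_{1}\to Z_{2}$. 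Treating $e_{h}$ as the discrete solution of its own difference problem is exactly what forces the data, and therefore every resulting bound, to be proportional to $\|Z_{1}-Z_{2}\|_{L^{\infty}(\Gamma_{R})}$.

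For the pointwise estimate I would mirror the two ingredients already present in the excerpt. At the continuous level, consider $\tilde{e}\in H^{1}(D)$ solving $a_{Z_{2}}(\tilde{e},q)=\tilde{F}(q)$ with $\tilde{F}(q)\coloneqq -\int_{\Gamma_{R}} i\omega\rho(1/Z_{1}-1/Z_{2})\,G_{Z_{1}}^{\var}\,\overline{q}\,ds$. Here it is essential that the source $\var\in D_{\kappa}$ lies at distance $>\kappa$ from $\Gamma_{R}$, so that the trace $G_{Z_{1}}^{\var}|_{\Gamma_{R}}$ is smooth and bounded uniformly in $Z_{1}$, precisely as the $\Phi^{\var}$-traces were handled in Proposition \ref{pro:VeryWeakSolutionExistence}. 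The $H^{2}$- and $W^{2,\infty}$-regularity estimates of \cite{bermudez2004finite} invoked there then yield $\|\tilde{e}\|_{W^{2,\infty}(M_{\kappa})}\le C_{\kappa}\|Z_{1}-Z_{2}\|_{L^{\infty}(\Gamma_{R})}$, hence the continuous pointwise bound $\lvert\tilde{e}(z)\rvert\le C_{\kappa}\|Z_{1}-Z_{2}\|_{L^{\infty}(\Gamma_{R})}$. Since this $W^{2,\infty}(M_{\kappa})$-bound is exactly the hypothesis needed for the proof underlying Theorem \ref{thm:DiscretisationErrorG}, the same pointwise finite element estimate applied to the normalised auxiliary problem bounds the discretisation error of $\tilde{e}$ by $C_{\kappa}\lvert\ln h\rvert h^{2}\|Z_{1}-Z_{2}\|_{L^{\infty}(\Gamma_{R})}$, so its discrete counterpart $\tilde{e}_{h}$ obeys $\lvert\tilde{e}_{h}(z)\rvert\le C_{\kappa}(1+\lvert\ln h\rvert h^{2})\|Z_{1}-Z_{2}\|_{L^{\infty}(\Gamma_{R})}\le C_{\kappa}\|Z_{1}-Z_{2}\|_{L^{\infty}(\Gamma_{R})}$ for $h\le h_{0}$.

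The main obstacle, and the step demanding the most care, is matching $e_{h}$ with $\tilde{e}_{h}$: the genuine functional $F$ contains the discrete trace $G_{Z_{1},h}^{\var}|_{\Gamma_{R}}$ rather than the continuous one. The difference $e_{h}-\tilde{e}_{h}$ is the discrete solution for the data defect $F-\tilde{F}$, which is proportional to $\|Z_{1}-Z_{2}\|_{L^{\infty}(\Gamma_{R})}\,\|G_{Z_{1},h}^{\var}-G_{Z_{1}}^{\var}\|_{L^{2}(\Gamma_{R})}$, so I would close the argument through a uniform-in-$h$ bound on $\|G_{Z_{1},h}^{\var}\|_{L^{2}(\Gamma_{R})}$ together with a uniform pointwise stability estimate for the discrete problem with data supported on $\Gamma_{R}$, evaluated at interior points $z\in M_{\kappa}$. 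Both of these concern the behaviour of the finite element solution \emph{near} $\Gamma_{R}$, not in $M_{\kappa}$, and are therefore not covered verbatim by the interior estimates quoted earlier; establishing them via the discrete Green's function / weighted-norm machinery that underlies Theorem \ref{thm:DiscretisationErrorG} (Schatz--Wahlbin type estimates) is where the real work lies. Once this near-boundary control is in place, collecting the continuous bound on $\tilde{e}$, the discretisation bound on $\tilde{e}_{h}$, and the defect bound on $e_{h}-\tilde{e}_{h}$, and using $h\le h_{0}$, delivers the claimed Lipschitz estimate with a constant $C_{\kappa}$ independent of $h$ and of $z$.
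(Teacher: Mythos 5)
Your strategy is sound and considerably more detailed than what the paper actually provides: the paper's proof of this proposition is a single sentence asserting that the continuous argument of Proposition~\ref{pro:VeryWeakSolutionExistence} carries over \enquote{by replacing Green's function with its discrete version}, i.e.\ one writes $e_h = G_{Z_1,h}^{s}-G_{Z_2,h}^{s}$ as the Galerkin solution of the perturbed problem $a_{Z_2}(e_h,q_h) = -\int_{\Gamma_R} i\omega\rho\,(1/Z_1-1/Z_2)\,G_{Z_1,h}^{s}\,\overline{q_h}\,ds$ --- precisely your starting identity --- and then appeals to discrete analogues of the stability bounds used in the continuous case. Your observation that the naive triangle inequality through Theorem~\ref{thm:DiscretisationErrorG} cannot work (the additive $\lvert\ln h\rvert h^2$ terms do not vanish as $Z_1\to Z_2$) is correct and worth stating; the paper does not comment on it. Where you genuinely diverge is in routing the estimate through a continuous auxiliary problem $\tilde e$ plus a defect term $e_h-\tilde e_h$: this lets you reuse the $W^{2,\infty}(M_\kappa)$ regularity and the pointwise finite element estimate as black boxes, at the cost of having to control the trace defect on $\Gamma_R$. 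Either route ultimately rests on the same ingredient, which you correctly isolate but do not prove: a pointwise stability estimate, uniform in $h$ and in $z\in M_\kappa$, for the discrete problem with data supported on $\Gamma_R$ (equivalently, a uniform $L^2(\Gamma_R)$ bound on the discrete adjoint Green's function centred at $z$, whose singularity lies at distance at least $\kappa$ from $\Gamma_R$). This is a local Schatz--Wahlbin-type estimate away from the singularity, implicitly supplied by the discrete Green's function machinery behind Theorem~\ref{thm:DiscretisationErrorG} in the appendix of \cite{engel2019application}; neither your sketch nor the paper writes it out, but your proposal is at least explicit about this being where the real work lies.
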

\begin{proof}
	The proof is similar to the continuous case in Proposition \ref{pro:VeryWeakSolutionExistence} by replacing Green's function with its discrete version.
\end{proof}

\section{Bayesian framework}
\label{sec:bayesian}
Since measurements are typically corrupted by noise, it is reasonable to work with models that take noise into account. In what follows we interpret the impedance $Z$ as random variable and estimate its statistical moments using the Bayesian approach and the ratio estimators. This provides quantitative information on the sensitivity of the parameter with respect to the data, for example if the variance of $Z$ is large, then multiple values and/or (large) regions are candidates for the true parameter value. If, however, the variance is very low, then only a small region in the domain of $Z$ produces accurate results if the underlying model is a good representation for the data. 

\subsection{Continuous posterior moments}
Let $\mathcal{F}:U\rightarrow H^{1}(D)$ and $\mathcal{O}:H^{1}(D)\rightarrow\mathbb{C}^{m}$ denote the forward map and observation operator, respectively, where $m\in\mathbb{N}$. The composition of both operators, denoted by $\mathcal{G}=\mathcal{O}\circ\mathcal{F}:U\rightarrow\mathbb{C}^{m}$, is given by
\begin{equation*}
	\mathcal{G}(Z) = \big(G_{Z}^{\var}(x_{1}),  G_{Z}^{\var}(x_{2}), \dots , G_{Z}^{\var}(x_{m})\big)^\top.
\end{equation*}
The problem is to determine an unknown element $Z\in U$ from noisy observations
\begin{equation}
	y = \mathcal{G}(Z) + \eta,
\end{equation}
where $\eta$ is a realization of a multivariate complex normal distributed random variable $\mathcal{CN}(0,\Gamma,C)$ with a Hermitian and non-negative covariance matrix $\Gamma$ and symmetric relation matrix $C$. The corresponding density of such a random variable is proportional to
\begin{equation*}
	\rho_{\eta} \propto \exp\left(-\frac{1}{2}\lVert z \rVert_{\Sigma}^{2}\right),\quad \forall z \in \mathbb{C}^{m}.
\end{equation*}
where $\Sigma \in \mathbb{C}^{2m\times 2m}$ is a positive definite complex matrix given by
\begin{equation*}
	\Sigma = \begin{pmatrix}
		\Gamma & C\\
		\overline{C} & \overline{\Gamma}
	\end{pmatrix} \qquad \text{ and } \qquad
	\lVert z \rVert_{\Sigma}^{2} \coloneqq \left(\overline{z}^{\top}, z^{\top}\right) \Sigma^{-1} \begin{pmatrix}
		z\\
		\overline{z}
	\end{pmatrix}.
\end{equation*} We assume $Z$ to be distributed according to a prior measure $\prior$ on $(U,\mathcal{B})$, where $\mathcal{B}$ is the Borel $\sigma$-algebra. Bayes' Theorem gives the following relation for the Radon-Nikodym derivative of the posterior measure $\posterior$ with respect to the prior measure $\prior$ (see, e.g., \cite[Theorem 6.31]{stuart2010inverse}):
\begin{equation}
	\label{eq:RadonNikodym}
	\frac{d\posterior}{d\prior}(Z) = \frac{\theta(Z,y)}{\Lambda(y)},
\end{equation}
where
\begin{align}
	\theta(Z,y) &= \exp(-\Psi(Z,y)),\\
	\Psi(Z,y) &=  \frac{1}{2}\lVert y - \mathcal{G}(Z)\rVert_{\Sigma}^2 \label{eq:Potential}\\
	\Lambda(y) &= \mathbb{E}_{\prior}[\theta(Z,y)]. \label{def-Lambda}
\end{align}
Here, $\theta$ is called the \emph{likelihood}, $\Psi$ the potential, $\log \theta = -\Psi$ the \emph{log-likelihood} and $\Lambda$~is a normalization constant. The expectation $\mathbb{E}_{\nu}[\phi]$ of a function $\phi: U\rightarrow \mathbb{R}$ with respect to a measure $\nu$ on $(U,\mathcal{B})$ is defined as
\begin{equation}\label{def-ratioEst}
	\mathbb{E}_{\nu}[\phi] = \int_{U} \phi(Z) d\nu(Z).
\end{equation} 

\begin{remark}
	Since the potential $\Psi(Z,y)$ is nonnegative, the likelihood $\theta(Z,y)$ can only take values between 0 and 1 and the value 1 is only attained if the measurement data $y$ exactly match the value $\mathcal{G}(Z)$. This is only possible if \emph{(i)} the parametric model exactly match the data $y$ and \emph{(ii)} the measurements contain no noise. The strength of the noise and the number of observation points clearly impact what can be treated as \enquote{a high value of the likelihood}. In particular, for the experimental setting of section \ref{sec:3Dimp} we will see that log-likelihood values of $\log \theta(Z,y) \sim -10$ (and larger) can be seen as high.
\end{remark}

Let $\phi:U\rightarrow \mathbb{R}$ be a $\prior$-measureable functional. If  the posterior measure $\posterior$ is well-defined (see Theorem \ref{thm:posteriorWellDefined} below), then, according to \eqref{eq:RadonNikodym}, the expected value of $\phi(Z)$ under the posterior measure can be expressed as
\begin{equation}
	\label{eq:ExpectedValue}
	\mathbb{E}_{\posterior}[\phi(Z)] = \frac{Q(y)}{\Lambda(y)},
\end{equation}
where the normalization constant $\Lambda(y)$ has been defined in \eqref{def-Lambda} and
\begin{equation}
	Q(y) = \mathbb{E}_{\prior}[\theta(Z,y)\phi(Z)]
\end{equation}
Notice that the posterior expectation $\mathbb{E}_{\posterior}[\phi(Z)]$ is a ratio of two expectations with respect to the prior measure. A computable version of \eqref{eq:ExpectedValue}, see \eqref{eq:ratioEstimator} below, is called a \emph{ratio estimator}.
Later we will chose $\phi(Z) = Re(Z)^{k}$ or $\phi(Z) = Im(Z)^{k}$ to compute $k$-th moments of the real or imaginary part of $Z$, respectively. From now on let $\prior$ be a measure on $(U,\mathcal{B})$.

The aim of the remainder of this subsection is to show that the posterior measure is well-defined.
\begin{definition}
	\label{def:BochnerSpace}
	Let $\nu$ be a measure on $(U,\mathcal{B})$ and $k\ge 1$. We say $\phi \in L_{\nu}^{k}(U)$ if
	\begin{equation}
		\lVert \phi \rVert_{L_{\nu}^{k}(U)} = \begin{cases}
			\left( \int_{U} \lvert \phi(Z) \rvert^{k} \,d\nu\right)^{\frac{1}{k}}, & 1 \le k \le \infty\\
			\mathrm{esssup}_{Z\in U} \lvert \phi(Z) \rvert, & k=\infty
		\end{cases}
	\end{equation}
	is finite.
\end{definition}

\begin{proposition}
	\label{prop:ObservationOperatorMeasureable}
	The operator $\mathcal{G}$ is $\prior$-measurable and bounded, i.e.,
	\[
	\lVert \mathcal{G}(Z) \rVert_{\Sigma} \le K.
	\]	
\end{proposition}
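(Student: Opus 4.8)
The plan is to split the proof into the two assertions of the statement -- $\prior$-measurability and the uniform bound $\lVert\mathcal{G}(Z)\rVert_{\Sigma}\le K$ -- and to derive both directly from Proposition~\ref{pro:VeryWeakSolutionExistence}, which already supplies the two ingredients we need: Lipschitz continuity of $Z\mapsto G_{Z}^{\var}(z)$ and a uniform $W^{2,\infty}(M_{\kappa})$-bound on $G_{Z}^{\var}$. Since $\mathcal{G}(Z)=\big(G_{Z}^{\var}(x_{1}),\dots,G_{Z}^{\var}(x_{m})\big)^{\top}$ with observation points $x_{1},\dots,x_{m}\in M_{\kappa}$, everything reduces to componentwise statements about the pointwise evaluations $Z\mapsto G_{Z}^{\var}(x_{j})$.

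For measurability I would argue as follows. By the last assertion of Proposition~\ref{pro:VeryWeakSolutionExistence}, for each fixed $z\in M_{\kappa}$ the map $Z\mapsto G_{Z}^{\var}(z)$ is Lipschitz continuous with respect to $\lVert\cdot\rVert_{L^{\infty}(\Gamma_{R})}$ on the set of admissible piecewise constant impedances, hence in particular continuous on $U$. A continuous map between metric spaces is Borel measurable, so each component $Z\mapsto G_{Z}^{\var}(x_{j})$ is $\mathcal{B}$-measurable. A $\mathbb{C}^{m}$-valued map is measurable precisely when all of its components are, so $\mathcal{G}$ is $\prior$-measurable.

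For the bound I would first obtain a bound in the standard Euclidean norm and then transfer it to $\lVert\cdot\rVert_{\Sigma}$. Proposition~\ref{pro:VeryWeakSolutionExistence} gives $\lVert G_{Z}^{\var}\rVert_{W^{2,\infty}(M_{\kappa})}\le C_{\kappa}$ with $C_{\kappa}$ independent of $Z\in U$; since each $x_{j}\in M_{\kappa}$, this yields $\lvert G_{Z}^{\var}(x_{j})\rvert\le C_{\kappa}$ for every $j$, whence $\lVert\mathcal{G}(Z)\rVert_{\mathbb{C}^{m}}\le\sqrt{m}\,C_{\kappa}$ uniformly in $Z$. To pass to the $\Sigma$-norm, recall that $\Sigma\in\mathbb{C}^{2m\times 2m}$ is positive definite, so $\Sigma^{-1}$ is Hermitian positive definite, and writing $w=\big(z^{\top},\overline{z}^{\top}\big)^{\top}\in\mathbb{C}^{2m}$ we have $\lVert z\rVert_{\Sigma}^{2}=w^{*}\Sigma^{-1}w\le\lambda_{\max}(\Sigma^{-1})\lVert w\rVert^{2}=2\,\lambda_{\max}(\Sigma^{-1})\lVert z\rVert_{\mathbb{C}^{m}}^{2}$. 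Combining the two estimates gives $\lVert\mathcal{G}(Z)\rVert_{\Sigma}\le K$ with $K=\sqrt{2m\,\lambda_{\max}(\Sigma^{-1})}\,C_{\kappa}$.

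There is no serious obstacle here; the argument is essentially a bookkeeping exercise once Proposition~\ref{pro:VeryWeakSolutionExistence} is in hand. The only two points that deserve care are, first, confirming that the observation points indeed lie in $M_{\kappa}$ so that the uniform regularity bound applies at each $x_{j}$, and second, handling the weighted norm $\lVert\cdot\rVert_{\Sigma}$ correctly -- in particular noting that it is defined on $\mathbb{C}^{m}$ through the augmented vector $(z,\overline{z})$ in $\mathbb{C}^{2m}$, and that positive definiteness of $\Sigma$ makes it equivalent to the Euclidean norm, which is what legitimizes the final estimate.
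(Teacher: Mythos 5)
Your proposal is correct and follows essentially the same route as the paper: the paper likewise derives both measurability and boundedness directly from the continuity (Lipschitz in $Z$) and uniform boundedness of $G_{Z}^{\var}(z)$ for $z \in M_{\kappa}$ established in Proposition~\ref{pro:VeryWeakSolutionExistence}, noting that the constant is uniform because $1/Z$ is bounded on $U$. You merely spell out the componentwise reduction and the equivalence of $\lVert\cdot\rVert_{\Sigma}$ with the Euclidean norm, details the paper leaves implicit.
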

\begin{proof}
	The measurability and the boundedness follow from the continuity and boundedness of $G_{Z}^{\var}(z)$ in $Z$ for all measurement positions $z\in M_{\kappa}$ from Proposition \ref{pro:VeryWeakSolutionExistence}. Note that $K$ depends on $Z$ or more precisely on $\frac{1}{Z}$. But $\frac{1}{Z}$ is bounded since we assume $Z\in U$, hence $K$ can be chosen independently of $Z$.
\end{proof}

\begin{lemma}[{\cite[Lemma 3.6]{engel2019application}}]
	\label{lem:PropertiesOfPotential}
	The potential $\Psi$ and the prior measure $\prior$ satisfy
	\begin{enumerate}
		\item[(i)] There exists a constant $K>0$ such that
		\[
		0 \leq \Psi(Z,y)\le K + \lVert y \rVert^{2}_{\Sigma}, \quad \text{for all } Z\in U, y\in \mathbb{C}^{m}
		\]
		\item[(ii)] For every $y\in\mathbb{C}^{m}$ the map $\Psi(\cdot,y):U\rightarrow \mathbb{R}$ is $\prior$-measurable.
		\item[(iii)] For every $\varrho>0$ there exists a constant $C_{\varrho}>0$ such that 
		\[
		\lvert \Psi(Z,y_{1})-\Psi(Z,y_{2})\rvert \le C_{\varrho}\lVert y_{1}-y_{2}\rVert_{\Sigma}
		\]
		for all $Z\in U$ and for all $y_{1}, y_{2}\in B_{\varrho}(0) \coloneqq \{y \in \mathbb{R}^{m} ~:~ \lVert y \rVert_{2} < \varrho \}$.
	\end{enumerate}
\end{lemma}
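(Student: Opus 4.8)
The plan is to derive all three properties directly from the definition $\Psi(Z,y)=\tfrac{1}{2}\lVert y-\mathcal{G}(Z)\rVert_{\Sigma}^{2}$ in \eqref{eq:Potential} together with the uniform bound $\lVert\mathcal{G}(Z)\rVert_{\Sigma}\le K$ supplied by Proposition \ref{prop:ObservationOperatorMeasureable}. The only analytic inputs needed are that $\lVert\cdot\rVert_{\Sigma}$ is a genuine norm on $\mathbb{C}^{m}$ (so that the triangle inequality and its reverse apply) and that, being one of finitely many equivalent norms on a finite-dimensional space, it is comparable to the Euclidean norm $\lVert\cdot\rVert_{2}$ used to define $B_{\varrho}(0)$. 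For (i), nonnegativity is immediate since $\Psi$ is half of a squared norm. For the upper bound I would apply the triangle inequality $\lVert y-\mathcal{G}(Z)\rVert_{\Sigma}\le\lVert y\rVert_{\Sigma}+\lVert\mathcal{G}(Z)\rVert_{\Sigma}\le\lVert y\rVert_{\Sigma}+K$ and then use $(a+b)^{2}\le 2a^{2}+2b^{2}$ to obtain $\Psi(Z,y)\le\lVert y\rVert_{\Sigma}^{2}+K^{2}$; renaming $K^{2}$ as the constant gives the claimed form uniformly in $Z\in U$.

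For (ii) I would write $\Psi(\cdot,y)=g\circ\mathcal{G}$, where $g:\mathbb{C}^{m}\to\mathbb{R}$, $g(v)=\tfrac{1}{2}\lVert y-v\rVert_{\Sigma}^{2}$, is continuous and hence Borel measurable. Since $\mathcal{G}$ is $\prior$-measurable by Proposition \ref{prop:ObservationOperatorMeasureable}, the composition is $\prior$-measurable as well.

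For (iii) I would factor the difference of squares: setting $a\coloneqq\lVert y_{1}-\mathcal{G}(Z)\rVert_{\Sigma}$ and $b\coloneqq\lVert y_{2}-\mathcal{G}(Z)\rVert_{\Sigma}$, one has $\Psi(Z,y_{1})-\Psi(Z,y_{2})=\tfrac{1}{2}(a-b)(a+b)$. The reverse triangle inequality yields $\lvert a-b\rvert\le\lVert y_{1}-y_{2}\rVert_{\Sigma}$, while $a+b\le\lVert y_{1}\rVert_{\Sigma}+\lVert y_{2}\rVert_{\Sigma}+2K$. For the latter I invoke norm equivalence to bound $\lVert y_{i}\rVert_{\Sigma}\le c\lVert y_{i}\rVert_{2}<c\varrho$ for $y_{i}\in B_{\varrho}(0)$. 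Combining these estimates gives $\lvert\Psi(Z,y_{1})-\Psi(Z,y_{2})\rvert\le(c\varrho+K)\lVert y_{1}-y_{2}\rVert_{\Sigma}$, so that $C_{\varrho}=c\varrho+K$ works uniformly in $Z\in U$.

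This lemma is essentially routine, and I do not expect a serious obstacle. The two points requiring care are the uniform-in-$Z$ nature of every constant, which is precisely what Proposition \ref{prop:ObservationOperatorMeasureable} secures (the bound there depends only on $1/Z$, which is controlled for $Z\in U$), and the switch between $\lVert\cdot\rVert_{2}$ and $\lVert\cdot\rVert_{\Sigma}$ when passing from the definition of $B_{\varrho}(0)$ to the $\Sigma$-norm appearing in the Lipschitz estimate. Both are harmless on the finite-dimensional observation space $\mathbb{C}^{m}$.
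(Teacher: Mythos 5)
Your proposal is correct and follows essentially the same route as the paper: nonnegativity plus the triangle inequality and $\lVert\mathcal{G}(Z)\rVert_{\Sigma}\le K$ for (i), measurability of a continuous function composed with the measurable map $\mathcal{G}$ for (ii), and a difference-of-squares/polarization argument with the uniform bound on $\mathcal{G}$ for (iii). Your explicit remark on the equivalence of $\lVert\cdot\rVert_{2}$ and $\lVert\cdot\rVert_{\Sigma}$ on $\mathbb{C}^{m}$ is a small point of extra care that the paper leaves implicit, but it does not change the argument.
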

\begin{proof}
	\begin{enumerate}
		\item[(i)] The lower bound follows from the definition of the potential \eqref{eq:Potential}, whereas the upper bound is a consequence the triangle inequality and Proposition \ref{prop:ObservationOperatorMeasureable} since
		\begin{align*}
			\Psi(Z,y) = \frac{1}{2} \lVert y - \mathcal{G}(Z) \rVert_{\Sigma}^{2}\le \lVert y \rVert_{\Sigma}^{2} + \lVert \mathcal{G}(Z) \rVert_{\Sigma}^{2} \le \lVert y \rVert_{\Sigma}^{2} + K.
		\end{align*}
		\item[(ii)] Follows from the measurability of $\mathcal{G}$ and the continuity of $\Psi(Z,\cdot)$.
		\item[(iii)] We have
		%
		\begin{align*}
			\Psi(Z,y_{1})  - \Psi(Z,y_{2}) 
			&= \frac{1}{2}  \|  y_{1}-\mathcal{G}(Z) \|_{\Sigma}^2 - \frac{1}{2} \|y_{2}-\mathcal{G}(Z)\|_{\Sigma}^2  \\
			&= \frac{1}{2}   (y_{1}-\mathcal{G}(Z),y_{1}-y_{2})_{\Sigma} + \frac{1}{2} (y_{1}-y_{2},y_{2}-\mathcal{G}(Z))_{\Sigma}  
		\end{align*}
		Now, by Cauchy-Schwarz and the triangle inequality we continue as
		\begin{align*}
			|\Psi(Z,y_{1})  - \Psi(Z,y_{2})| &\le \frac{1}{2}\lVert y_{1} - y_{2} \rVert_{\Sigma} \left(\lVert y_{1}-\mathcal{G}(Z) \rVert_{\Sigma} + \lVert y_{2} - \mathcal{G}(Z) \rVert_{\Sigma}\right)\\
			&\le \lVert y_{1}-y_{2}\rVert_{\Sigma} \left(\lVert \mathcal{G}(Z) \rVert_{\Sigma} + \frac{1}{2}\lVert y_{1} \rVert_{\Sigma} + \frac{1}{2}\lVert y_{2} \rVert_{\Sigma}\right)\\
			&\le C_{\varrho}\lVert y_{1} -y _{2} \rVert_{\Sigma},
		\end{align*}
		where the last inequality holds since $\mathcal{G}(Z)$ is bounded and $y_{1},y_{2}\in B_{\varrho}(0)$.
	\end{enumerate}	
\end{proof}
To show the well-posedness of the posterior $\posterior$ it is essential to show the following bounds on the normalization constant $\Lambda(y)$.
\begin{lemma}
	\label{lem:NormalizationConstant}
	For the normalization constant $\Lambda(y)$ there exist $\alpha>0$ such that
	\begin{align*}
		&\alpha \exp\left(-\lVert y \rVert_{\Sigma}\right) \le \Lambda(y) \le 1 \qquad \text{ for all } y \in \mathbb{C}^{m}.
	\end{align*}
\end{lemma}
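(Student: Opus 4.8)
The plan is to treat the two inequalities separately, using only that $\prior$ is a probability measure, the nonnegativity of the potential, and the uniform boundedness of the observation operator $\mathcal{G}$ established earlier.

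The upper bound is immediate. By Lemma \ref{lem:PropertiesOfPotential}(i) the potential satisfies $\Psi(Z,y)\ge 0$, so the likelihood obeys $\theta(Z,y)=\exp(-\Psi(Z,y))\le 1$ for every $Z\in U$ and every $y\in\mathbb{C}^{m}$. Integrating this pointwise bound against the prior and using $\prior(U)=1$ yields $\Lambda(y)=\mathbb{E}_{\prior}[\theta(Z,y)]\le \mathbb{E}_{\prior}[1]=1$.

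For the lower bound I would reduce the integral estimate to a pointwise lower bound on $\theta$ that is uniform in $Z$. Since $\Lambda(y)=\mathbb{E}_{\prior}[\theta(Z,y)]$ and $\theta>0$, any bound of the form $\theta(Z,y)\ge \alpha\exp(-\lVert y\rVert_{\Sigma})$ valid for all $Z\in U$ transfers directly to $\Lambda(y)$ after integrating against the probability measure $\prior$, giving $\Lambda(y)\ge \alpha\exp(-\lVert y\rVert_{\Sigma})\,\prior(U)=\alpha\exp(-\lVert y\rVert_{\Sigma})$. Such a pointwise bound in turn follows from an upper bound on the potential that is affine in $\lVert y\rVert_{\Sigma}$ and uniform in $Z$: if $\Psi(Z,y)\le C+\lVert y\rVert_{\Sigma}$ for all $Z\in U$, then $\theta(Z,y)=\exp(-\Psi(Z,y))\ge e^{-C}\exp(-\lVert y\rVert_{\Sigma})$ and one may set $\alpha=e^{-C}$. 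The natural source of such a bound is Proposition \ref{prop:ObservationOperatorMeasureable}, which gives $\lVert\mathcal{G}(Z)\rVert_{\Sigma}\le K$ uniformly in $Z$; together with the triangle inequality this yields $\lVert y-\mathcal{G}(Z)\rVert_{\Sigma}\le \lVert y\rVert_{\Sigma}+K$, and hence a control of $\Psi(Z,y)=\tfrac12\lVert y-\mathcal{G}(Z)\rVert_{\Sigma}^{2}$ purely in terms of $\lVert y\rVert_{\Sigma}$ and the fixed constant $K$.

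The main obstacle is precisely the passage from this triangle-inequality estimate, which produces $\tfrac12(\lVert y\rVert_{\Sigma}+K)^{2}$ and therefore a quadratic dependence on $\lVert y\rVert_{\Sigma}$, to the affine upper bound $C+\lVert y\rVert_{\Sigma}$ needed for the stated rate $\exp(-\lVert y\rVert_{\Sigma})$. I expect this linear-versus-quadratic reconciliation to be the delicate step: one must exploit the uniform boundedness of $\mathcal{G}$, together with the fact that the estimate is only ever invoked for data $y$ drawn from the admissible measurement range (cf. the balls $B_{\varrho}(0)$ appearing in Lemma \ref{lem:PropertiesOfPotential}(iii)), so that the quadratic contribution is absorbed into the affine bound and $C$, and hence $\alpha$, can be fixed. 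Once the affine potential bound is secured, the remaining steps — forming $\theta=\exp(-\Psi)$, bounding it below uniformly in $Z$, and integrating against $\prior$ — are routine.
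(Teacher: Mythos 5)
Your upper bound is exactly the paper's argument and is fine. For the lower bound you follow the same route as the paper, namely integrating a pointwise lower bound on $\theta(Z,y)$ obtained from an upper bound on the potential, but the obstacle you flag is genuine and your proposal does not close it. From Proposition \ref{prop:ObservationOperatorMeasureable} the best uniform estimate is $\Psi(Z,y)=\tfrac12\lVert y-\mathcal{G}(Z)\rVert_{\Sigma}^{2}\le \tfrac12\bigl(\lVert y\rVert_{\Sigma}+K\bigr)^{2}$, and this quadratic growth is not an artifact of the triangle inequality: the potential genuinely grows like $\tfrac12\lVert y\rVert_{\Sigma}^{2}$ uniformly over $Z\in U$, so that $\Lambda(y)\le \exp\bigl(-\tfrac12(\lVert y\rVert_{\Sigma}-K)^{2}\bigr)$ for $\lVert y\rVert_{\Sigma}\ge K$. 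For large $\lVert y\rVert_{\Sigma}$ this falls below $\alpha\exp(-\lVert y\rVert_{\Sigma})$ for any fixed $\alpha>0$, so the affine bound $\Psi(Z,y)\le C+\lVert y\rVert_{\Sigma}$ that your argument needs cannot hold for all $y\in\mathbb{C}^{m}$, and no ``absorption'' can produce it. Restricting to $y\in B_{\varrho}(0)$, as you suggest, does rescue the estimate (there $\Psi\le\tfrac12(\varrho+K)^{2}$, hence $\Lambda(y)\ge e^{-\frac12(\varrho+K)^{2}}\ge \alpha_{\varrho}\,e^{-\lVert y\rVert_{\Sigma}}$), but then $\alpha$ depends on $\varrho$ and the claim as stated, with one $\alpha$ for all of $\mathbb{C}^{m}$, is not obtained.

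You should know that the paper's own proof stumbles at precisely this point: it cites Lemma \ref{lem:PropertiesOfPotential} (part (ii) is written, but part (i) is meant), whose bound is $\Psi(Z,y)\le K+\lVert y\rVert_{\Sigma}^{2}$, and then writes $\exp\bigl(-(K+\lVert y\rVert_{\Sigma})\bigr)$ in the integrand, silently dropping the square. What the cited bound actually yields is $\Lambda(y)\ge e^{-K}\exp\bigl(-\lVert y\rVert_{\Sigma}^{2}\bigr)$, or alternatively the locally uniform positivity $\Lambda(y)\ge\alpha_{\varrho}>0$ on balls $B_{\varrho}(0)$, which is the form found in the Dashti--Stuart references and is all that is used downstream: Theorem \ref{thm:posteriorWellDefined} and Theorem \ref{thm:MSE} work with a fixed $y$, for which uniform positivity of $\Lambda(y)$ in $Z$ suffices. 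So your instinct that the linear-versus-quadratic reconciliation is the delicate step is correct; the honest repair is to prove the quadratic-exponent bound (or the bounded-$y$ version) rather than to chase the affine potential bound, and the lemma's statement should be weakened accordingly.
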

\begin{proof}
	The upper bound follows directly from the definition \eqref{def-Lambda} of $\Lambda(y)$
	\begin{align*}
		\Lambda &= \mathbb{E}_{\prior}\left[ \theta(Z,y) \right] = \int\limits_{U} \theta(Z,y)\,d\prior(Z) = \int\limits_{U} \exp(-\Psi(Z,y))\,d\prior(Z)\le 1,	
	\end{align*}
	since $\prior$ is a probability measure on $U$ and $\Psi$ is nonnegative. For the lower bound we use property (ii) from Lemma \ref{lem:PropertiesOfPotential} to get
	\begin{align*}
		\Lambda = \int\limits_{U} \exp(-\Psi(Z,y))\,d\prior(Z) \ge \int\limits_{U} \exp(-(K+\lVert y \rVert_{\Sigma}))\,d\prior(Z) = \alpha \exp(-\lVert y \rVert_\Sigma)
	\end{align*}
	with $\alpha \coloneqq \exp(-K)$.
\end{proof}

We can now state the well-posedness of the posterior $\posterior$ and the existence of moments in the following theorem, cf. \cite[Theorem 15]{Dashti.2017}.
\begin{theorem} 
	\label{thm:posteriorWellDefined}
	Let $y\in \mathbb{C}^{m}$ be a fixed set of measurements. The posterior defined in \eqref{eq:RadonNikodym} is well defined. Furthermore, if $\phi \in L_{\nu_{0}}^{p}(U)$ then $\phi \in L_{\nu^{y}}^{p}(U)$ for all $1 \leq p \leq \infty$ and a fixed $y$.
\end{theorem}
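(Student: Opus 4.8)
The plan is to separate the statement into two independent claims and dispatch each using the bounds already collected. For well-posedness of $\posterior$ I would argue that the right-hand side of \eqref{eq:RadonNikodym} defines a genuine probability density with respect to $\prior$. The numerator $\theta(Z,y)=\exp(-\Psi(Z,y))$ is $\prior$-measurable by Lemma \ref{lem:PropertiesOfPotential}(ii) and satisfies $0\le\theta(Z,y)\le 1$ because $\Psi\ge 0$ by \eqref{eq:Potential}. The denominator is a fixed positive number: Lemma \ref{lem:NormalizationConstant} gives $0<\alpha\exp(-\lVert y\rVert_{\Sigma})\le\Lambda(y)\le 1$, so for each fixed $y$ the quotient $\theta(\cdot,y)/\Lambda(y)$ is nonnegative, measurable, and integrates to $1$ against $\prior$ by the very definition \eqref{def-Lambda} of $\Lambda(y)$. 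Hence $\posterior$ is a well-defined probability measure with $\posterior\ll\prior$, which is all that is required.

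For the moment statement with $1\le p<\infty$ I would compute the $L_{\posterior}^{p}$ norm by changing measure through the Radon--Nikodym derivative,
\begin{equation*}
	\lVert\phi\rVert_{L_{\posterior}^{p}(U)}^{p}=\int_{U}\lvert\phi(Z)\rvert^{p}\,d\posterior(Z)=\frac{1}{\Lambda(y)}\int_{U}\lvert\phi(Z)\rvert^{p}\,\theta(Z,y)\,d\prior(Z).
\end{equation*}
Using $\theta(Z,y)\le 1$ pointwise together with the lower bound $\Lambda(y)\ge\alpha\exp(-\lVert y\rVert_{\Sigma})$ from Lemma \ref{lem:NormalizationConstant}, this is bounded by $\Lambda(y)^{-1}\lVert\phi\rVert_{L_{\prior}^{p}(U)}^{p}$, so
\begin{equation*}
	\lVert\phi\rVert_{L_{\posterior}^{p}(U)}\le\Lambda(y)^{-1/p}\,\lVert\phi\rVert_{L_{\prior}^{p}(U)}<\infty,
\end{equation*}
which gives $\phi\in L_{\posterior}^{p}(U)$ with an explicit, $y$-dependent constant.

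The case $p=\infty$ is the only point that is not a direct integral estimate, so I would treat it separately and flag it as the single subtlety: the norm is an essential supremum, and one cannot simply factor out $\theta$ or $\Lambda(y)$. Instead I would invoke the absolute continuity $\posterior\ll\prior$ established in the first part, so that every $\prior$-null set is also a $\posterior$-null set; the collection of sets one may discard in forming the essential supremum with respect to $\posterior$ therefore contains the corresponding collection for $\prior$. Consequently $\mathrm{esssup}_{Z\in U,\,\posterior}\lvert\phi(Z)\rvert\le\mathrm{esssup}_{Z\in U,\,\prior}\lvert\phi(Z)\rvert$, i.e. $\lVert\phi\rVert_{L_{\posterior}^{\infty}(U)}\le\lVert\phi\rVert_{L_{\prior}^{\infty}(U)}<\infty$. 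I do not anticipate a genuine obstacle beyond keeping measurability and the strict positivity of $\Lambda(y)$ in view; the entire argument rests on the two-sided bound on $\Lambda(y)$ from Lemma \ref{lem:NormalizationConstant} and the uniform bound $\theta\le 1$.
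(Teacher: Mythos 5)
Your argument is correct and follows essentially the same route as the paper: well-posedness from $0\le\theta\le 1$, measurability, and the uniform positivity of $\Lambda(y)$ from Lemma \ref{lem:NormalizationConstant}, and the moment bound by the change-of-measure identity $\lVert\phi\rVert_{L_{\posterior}^{p}}^{p}=\Lambda(y)^{-1}\mathbb{E}_{\prior}[\lvert\phi\rvert^{p}\theta(\cdot,y)]$. Your separate treatment of $p=\infty$ via the absolute continuity $\posterior\ll\prior$ is a small but genuine improvement, since the paper's single displayed integral estimate formally covers only finite $p$.
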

\begin{proof}
	We show that the Radon-Nikodym derivative \eqref{eq:RadonNikodym} is bounded for all $Z\in U$. This follows since $\theta(Z,y) \leq 1$ and $\Lambda(y)$ is uniformly positive for $Z \in U$, cf. Lemma \ref{lem:NormalizationConstant}. 
	To show that $\phi \in L_{\prior}^{p}(U)$ implies $\phi \in L_{\posterior}^{p}(U)$ we observe by \eqref{def-ratioEst} that
	\begin{align*}
		\lVert \phi \rVert_{L_{\posterior}^{p}(U)}^p = \mathbb{E}_{\posterior}[\lvert \phi \rvert^{p}]   =   \frac{1}{\Lambda(y)} \mathbb{E}_{\prior}\left[ \lvert \phi \rvert^{p} \theta(\cdot,y)\right]
		\leq \frac{1}{\Lambda(y)} \lVert \phi \rVert_{L_{\prior}^{p}(U)}^p    < \infty,
	\end{align*}
	since $\theta(Z,y) \leq 1$ and $\Lambda(y)$ is uniformly positive for $Z \in U$ by  Lemma \ref{lem:NormalizationConstant}.
\end{proof}

Note that the expected value of $\phi$ under the posterior measure $\posterior$ is stable with respect to the data, i.e.,
\[
\lVert \mathbb{E}_{\mu^{y_{1}}}[\phi]-\mathbb{E}_{\mu^{y_{2}}}[\phi]\rVert_{X} \le c\lVert y_{1}-y_{2}\rVert_{\Sigma}.
\]
This can be shown by the means of the Hellinger distance, using the properties of the potential proven in Lemma \ref{lem:PropertiesOfPotential}. See \cite[Theorem 16]{Dashti.2017} and \cite[Lemma 6.37]{stuart2010inverse} for the details.

\subsection{Computable posterior moments}
In practice, the weak formulation is discretized and solved approximately. A discretization naturally introduces perturbation of the forward map $\mathcal{F}: U \rightarrow V$. To extend the Bayesian framework to this setting, we introduce the discrete forward map $\mathcal{F}_h: U \rightarrow V_h$, which now depends on the discretization parameter $h$. For a Finite Element discretization from section \ref{sec:discretization} $h$ stands for the mesh size. We can then define the discrete version of the operator $\mathcal{G}$ introduced in section \ref{sec:bayesian} as $\mathcal{G}_h \coloneqq \mathcal{O} \circ \mathcal{F}_h: U \rightarrow \mathbb{C}^m$. To adapt the likelihood, potential, and normalization constant to the discrete setting, we define 
\begin{align}
	\theta_{h}(Z,y) &= \exp(-\Psi_{h}(Z,y)),\label{eq:discreteLikelihood} 
	& & 
	Q_{h}(y) = \mathbb{E}_{\prior}[\theta_{h}(Z,y)\phi(Z)],\\
	\Psi_{h}(Z,y)  &=  \frac{1}{2}\lVert y - \mathcal{G}_{h}(Z)\rVert_{\Sigma}^2,  &&
	\Lambda_{h}(y) = \mathbb{E}_{\prior}[\theta_{h}(Z,y)].\label{eq:discretePotential} 
\end{align}
\begin{remark}
	\label{rem:PropertiesDiscretization}
	The discrete potential $\Psi_{h}$ satisfies the same properties as $\Psi$ given in Lemma \ref{lem:PropertiesOfPotential} in the continuous case, where the constants and sets are independent of $h$ \cite[Theorem 4.3]{engel2019application}. Furthermore, Lemma \ref{lem:NormalizationConstant} also holds for the discrete normalization constant $\Lambda_{h}$.
\end{remark}

If the Finite Element approximation converges, one expects that the size of perturbations $\mathcal{G} - \mathcal{G}_h$ and $\Psi - \Psi_h$ become smaller as $h \to 0$. The following two lemmas quantify this statement in a precise way.

\begin{lemma}[{\cite[Lemma 4.1]{engel2019application}}]
	\label{lem:DiscretizationErrorObservationOperator}
	There exist $h_{0},C>0$ such that for every $h\in(0,h_{0}]$ and every $Z\in U$ the discrete observation operator $\mathcal{G}_{h}$ satisfies
	\begin{align}
		\label{eq:discreteObservationOperator1}
		\lVert \mathcal{G}_{h}(Z)\rVert_{\Sigma} &\le C \qquad \text{ and }\\
		\label{eq:discreteObservationOperator2}
		\lVert \mathcal{G}(Z)-\mathcal{G}_{h}(Z)\rVert_{\Sigma} &\le C \lvert \ln h \rvert h^{2}.
	\end{align}
	Furthermore, $\mathcal{G}_{h}$ is $\prior$-measurable.
\end{lemma}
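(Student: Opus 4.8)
The plan is to prove both parts of Lemma~\ref{lem:DiscretizationErrorObservationOperator} by reducing the statements about $\mathcal{G}_h$, which acts into $\mathbb{C}^m$, to the pointwise estimates for the discrete Green's function $G_{Z,h}^{\var}$ that were already established. Recall that by definition $\mathcal{G}_h(Z) = \big(G_{Z,h}^{\var}(x_1),\dots,G_{Z,h}^{\var}(x_m)\big)^\top$, where the measurement points $x_1,\dots,x_m$ all lie in $M_\kappa$. Since the $\Sigma$-norm on the finite-dimensional space $\mathbb{C}^m$ is equivalent to the Euclidean norm (with constants depending only on the fixed positive-definite matrix $\Sigma$ but not on $h$ or $Z$), it suffices to control each component $G_{Z,h}^{\var}(x_j)$ separately and then sum.

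For the boundedness estimate \eqref{eq:discreteObservationOperator1}, I would first use Proposition~\ref{pro:VeryWeakSolutionExistence}, which gives $\lVert G_{Z}^{\var}\rVert_{W^{2,\infty}(M_\kappa)}\le C_\kappa$ with $C_\kappa$ independent of $Z$ for $Z\in U$; by the Sobolev embedding this bounds $|G_Z^{\var}(x_j)|$ uniformly. Then Theorem~\ref{thm:DiscretisationErrorG} bounds the discretization error $|G_Z^{\var}(x_j)-G_{Z,h}^{\var}(x_j)|\le C_\kappa|\ln h|h^2$, which is itself uniformly bounded for $h\in(0,h_0]$. A triangle inequality $|G_{Z,h}^{\var}(x_j)|\le |G_Z^{\var}(x_j)|+|G_Z^{\var}(x_j)-G_{Z,h}^{\var}(x_j)|$ then yields a uniform bound on each component, and hence on $\lVert\mathcal{G}_h(Z)\rVert_\Sigma$. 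For the convergence estimate \eqref{eq:discreteObservationOperator2}, I would apply Theorem~\ref{thm:DiscretisationErrorG} directly to each of the $m$ components, giving $|G_Z^{\var}(x_j)-G_{Z,h}^{\var}(x_j)|\le C_\kappa|\ln h|h^2$, and then combine the $m$ bounds via the norm equivalence, absorbing the factor $\sqrt{m}$ into the constant $C$.

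Finally, the $\prior$-measurability of $\mathcal{G}_h$ follows from the Lipschitz continuity of $Z\mapsto G_{Z,h}^{\var}(z)$ in $Z$ established in Proposition~\ref{cor:contGhinZ}: continuity in $Z$ implies that each component of $\mathcal{G}_h$ is a continuous, hence Borel-measurable, function on $U$, and a map into $\mathbb{C}^m$ is measurable iff each component is.

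I expect no genuine obstacle here, since every ingredient is already available: the pointwise bounds come from Proposition~\ref{pro:VeryWeakSolutionExistence}, the logarithmically-corrected $O(h^2)$ rate from Theorem~\ref{thm:DiscretisationErrorG}, and the measurability from Proposition~\ref{cor:contGhinZ}. The only point requiring a modicum of care is the $Z$-uniformity of all constants: one must verify that $C_\kappa$ in Theorem~\ref{thm:DiscretisationErrorG} is genuinely independent of $Z\in U$, which in turn rests on the $Z$-independent $W^{2,\infty}(M_\kappa)$ bound from Proposition~\ref{pro:VeryWeakSolutionExistence} (valid because $1/Z$ is bounded for $Z\in U$). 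Once this uniformity is in hand, the passage from the scalar pointwise estimates to the vector-valued $\Sigma$-norm statements is purely a matter of norm equivalence on $\mathbb{C}^m$.
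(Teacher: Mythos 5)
Your proposal is correct and follows essentially the same route as the paper's proof: the convergence bound \eqref{eq:discreteObservationOperator2} comes from applying Theorem~\ref{thm:DiscretisationErrorG} componentwise (the paper likewise absorbs the passage from pointwise to $\Sigma$-norm estimates into the constant), the boundedness \eqref{eq:discreteObservationOperator1} comes from the triangle inequality combined with the uniform bound on $\mathcal{G}(Z)$ (which the paper cites via Proposition~\ref{prop:ObservationOperatorMeasureable}, itself resting on Proposition~\ref{pro:VeryWeakSolutionExistence} exactly as you argue), and the measurability comes from the Lipschitz continuity in Proposition~\ref{cor:contGhinZ}. Your remark on the $Z$-uniformity of the constants correctly identifies the one point of care, and it is handled the same way in the paper.
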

\begin{proof}
From Theorem \ref{thm:DiscretisationErrorG} we have 
\begin{equation*}
\lvert G_{Z}^{\var}(z) - G_{Z,h}^{\var}(z)\rvert \le C_{\kappa}\lvert \ln h \rvert h^{2},  \quad \text{for all } z \in M_{\kappa} 
\end{equation*}
with $C_{\kappa}>0$ independent of $Z$ and $h$. Since $\mathcal{G}$ and $\mathcal{G}_{h}$ are just point evaluations of $G_{Z}^{\var}$ and $G_{Z,h}^{\var}$, \eqref{eq:discreteObservationOperator2} holds (with a different constant that does not depend on $h$). Further we have by the triangle inequality
\begin{equation*}
\lVert \mathcal{G}_{h}(Z)\rVert_{\Sigma} \le \lVert \mathcal{G}_{h}(Z)-\mathcal{G}(Z)\rVert_{\Sigma} + \lVert \mathcal{G}(Z)\rVert_{\Sigma}
\end{equation*}
which proves the boundedness of $\mathcal{G}_{h}$ since $\lVert \mathcal{G}(Z) \rVert_{\Sigma}$ is bounded due to Proposition \ref{prop:ObservationOperatorMeasureable}. The $\prior$-measureability follows from the continuity of $G_{Z,h}^{\var}$ with respect to $Z$ shown in Proposition \ref{cor:contGhinZ}. 
\end{proof}
\begin{lemma}[{\cite[Lemma 4.2]{engel2019application}}]
\label{lem:DiscretisationErrorPotential}
There exist $C,h_{0}>0$ such that for every $h\in(0,h_{0}]$ the discrete potential $\Psi_{h}$ satisfies for all $Z\in U$, $y\in\mathbb{C}^{m}$
\[
\lvert \Psi(Z,y)-\Psi_{h}(Z,y)\rvert \le C  \left(1 + \lVert y \rVert_{\Sigma}\right) \lvert \ln h \rvert h^{2}.
\]	
\end{lemma}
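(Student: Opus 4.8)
The plan is to mimic, almost verbatim, the argument used for part (iii) of Lemma \ref{lem:PropertiesOfPotential}, except that the perturbation now sits in the operator argument $\mathcal{G}$ versus $\mathcal{G}_{h}$ rather than in the data $y$. Writing $a \coloneqq y - \mathcal{G}(Z)$ and $b \coloneqq y - \mathcal{G}_{h}(Z)$, the difference of potentials is a difference of squared norms,
\begin{equation*}
	\Psi(Z,y) - \Psi_{h}(Z,y) = \tfrac{1}{2}\lVert a \rVert_{\Sigma}^{2} - \tfrac{1}{2}\lVert b \rVert_{\Sigma}^{2} = \tfrac{1}{2}\,( a - b,\, a + b )_{\Sigma},
\end{equation*}
where $(\cdot,\cdot)_{\Sigma}$ denotes the real inner product inducing the norm $\lVert \cdot \rVert_{\Sigma}$ (the same bilinear form already used in the proof of Lemma \ref{lem:PropertiesOfPotential}(iii)). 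I would then apply Cauchy--Schwarz to obtain
\begin{equation*}
	\lvert \Psi(Z,y) - \Psi_{h}(Z,y) \rvert \le \tfrac{1}{2}\,\lVert a - b \rVert_{\Sigma}\,\lVert a + b \rVert_{\Sigma},
\end{equation*}
and the proof reduces to bounding the two factors separately.

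For the first factor, note that $a - b = \mathcal{G}_{h}(Z) - \mathcal{G}(Z)$, so the discretization estimate \eqref{eq:discreteObservationOperator2} of Lemma \ref{lem:DiscretizationErrorObservationOperator} gives $\lVert a - b \rVert_{\Sigma} \le C \lvert \ln h \rvert h^{2}$ with $C$ independent of $Z$, $h$ and $y$. For the second factor, $a + b = 2y - \mathcal{G}(Z) - \mathcal{G}_{h}(Z)$, so the triangle inequality together with the uniform boundedness of $\mathcal{G}$ (Proposition \ref{prop:ObservationOperatorMeasureable}) and of $\mathcal{G}_{h}$ (estimate \eqref{eq:discreteObservationOperator1} of Lemma \ref{lem:DiscretizationErrorObservationOperator}) yields
\begin{equation*}
	\lVert a + b \rVert_{\Sigma} \le 2\lVert y \rVert_{\Sigma} + \lVert \mathcal{G}(Z) \rVert_{\Sigma} + \lVert \mathcal{G}_{h}(Z) \rVert_{\Sigma} \le 2\lVert y \rVert_{\Sigma} + K + C.
\end{equation*}
Combining the two bounds and absorbing constants gives $\lvert \Psi - \Psi_{h} \rvert \le C'(1 + \lVert y \rVert_{\Sigma}) \lvert \ln h \rvert h^{2}$, which is the claim; the linear dependence on $\lVert y \rVert_{\Sigma}$ is exactly the contribution of the $2\lVert y \rVert_{\Sigma}$ term in $\lVert a + b \rVert_{\Sigma}$.

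There is no serious obstacle here: the only point demanding minor care is the polarization identity for the squared $\Sigma$-norm, since $\lVert \cdot \rVert_{\Sigma}$ is a genuine real Hilbert-space norm on the $2m$-dimensional real representation of $\mathbb{C}^{m}$ (as witnessed by the bilinear form already invoked in Lemma \ref{lem:PropertiesOfPotential}(iii)), so the factorization $\lVert a \rVert_{\Sigma}^{2} - \lVert b \rVert_{\Sigma}^{2} = (a-b, a+b)_{\Sigma}$ is legitimate and Cauchy--Schwarz applies without taking real parts. The essential input is simply that the observation-operator error is $O(\lvert \ln h \rvert h^{2})$ uniformly in $Z$, which is supplied by Lemma \ref{lem:DiscretizationErrorObservationOperator}.
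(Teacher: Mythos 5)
Your proof is correct and follows essentially the same route as the paper's: both rewrite $\Psi-\Psi_h$ via a polarization identity for the difference of squared $\Sigma$-norms, apply Cauchy--Schwarz, and then invoke the $O(\lvert\ln h\rvert h^2)$ bound on $\lVert \mathcal{G}(Z)-\mathcal{G}_h(Z)\rVert_{\Sigma}$ from Lemma \ref{lem:DiscretizationErrorObservationOperator} together with the uniform bounds on $\mathcal{G}$ and $\mathcal{G}_h$. The only (immaterial) difference is that the paper splits the difference as $\tfrac12(a,a-b)_{\Sigma}+\tfrac12(a-b,b)_{\Sigma}$ rather than your $\tfrac12(a-b,a+b)_{\Sigma}$, and your remark about working with the real inner product on the $2m$-dimensional real representation correctly handles the one subtlety in that choice.
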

\begin{proof}
Acting as in the proof of Lemma Lemma Lemma \ref{lem:PropertiesOfPotential} (iii) we get 
\begin{align*}
\Psi(Z,y) - \Psi_{h}(Z,y)  &= \frac{1}{2}  \lVert y - \mathcal{G}(Z) \rVert_{\Sigma}^{2} - 
\frac{1}{2}\lVert y - \mathcal{G}_{h}(Z)\rVert_{\Sigma}^{2}  \\
&= \frac{1}{2}  (y-\mathcal{G}(Z), \mathcal{G}_{h}(Z)-\mathcal{G}(Z))_{\Sigma} 
+  \frac{1}{2}  (\mathcal{G}_{h}(Z)-\mathcal{G}(Z),y-\mathcal{G}_{h}(Z))_{\Sigma}.
\end{align*}
Thus, by the triangle and the Cauchy-Schwarz inequality we get
\begin{align*}
| \Psi(Z,y) - \Psi_{h}(Z,y)| &\leq  \lVert \mathcal{G}_{h}(Z)- \mathcal{G}(Z) \rVert_{\Sigma} \left(\lVert y \rVert_{\Sigma} + \frac{1}{2}\lVert \mathcal{G}(Z) \rVert_{\Sigma}+ \frac{1}{2}\lVert \mathcal{G}_{h}(Z) \rVert_{\Sigma}\right) \\
&\le C  \left(1 + \lVert y \rVert_{\Sigma} \right) \lvert \ln h \rvert h^{2},
\end{align*}
where we have used Poposition \ref{prop:ObservationOperatorMeasureable} and Lemma \ref{lem:DiscretizationErrorObservationOperator} in the last step.
\end{proof}

\begin{remark}
Lemma \ref{lem:DiscretizationErrorObservationOperator} and Lemma \ref{lem:DiscretisationErrorPotential} imply that (see also \cite[Theorem 17]{Dashti.2017} for a generalization of this): There exists $h_{0}>0$ such that the posterior measure $\posteriorh$ is well-defined for $h\in \left(0,h_{0}\right]$.	
\end{remark}

To make the posterior expectation \eqref{eq:ExpectedValue} fully computable, we replace the prior expectations by a computable approximation. In this paper we use for simplicity the standard Monte Carlo method, i.e., the empirical mean
\begin{equation*}
E_{N}[X] = \frac{1}{N}\sum\limits_{i=1}^{N} X^{i}.
\end{equation*}
for independent and identically distributed samples  $X^{1}, \dots, X^N$ of $X$.
Possible more sophisticated alternatives are, e.g., Quasi Monte Carlo methods  \cite{Sloan.1994,Niederreiter.1992,Kuo.2016} and Multilevel Monte Carlo methods  \cite{Heinrich.2001,Giles.2008,Barth2011MultilevelMC}. The fully computable ratio estimator is now given by 
\begin{equation}
\label{eq:ratioEstimator}
\frac{\widehat{Q}_{h,N}}{\widehat{\Lambda}_{h,N}} := \frac{E_{N}[Q_{h}]}{E_{N}[\Lambda_{h}]}.
\end{equation}
The following theorem states the convergence of the ratio estimator in the mean-square sense with respect to the prior measure.

%
%
%
%
%
\begin{theorem} 
\label{thm:MSE}
Let $\phi \in L_{\prior}^{2}(U)$. Then
\begin{equation*}\label{eq:MSE}
\mathrm{MSE}\left( \frac{\widehat{Q}_{h,N}}{\widehat{\Lambda}_{h,N}} \right) \coloneqq \mathbb{E}_{\prior}\left[ \left( \frac{Q}{\Lambda} - \frac{\widehat{Q}_{h,N}}{\widehat{\Lambda}_{h,N}} \right)^{2} \right] \le C \left( h^{4} \lvert \log h \rvert^{2} + \frac{1}{N} \right).
\end{equation*}
\end{theorem}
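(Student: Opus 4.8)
The plan is to introduce the \emph{semi-discrete}, deterministic ratio $Q_h/\Lambda_h$ (with $Q_h,\Lambda_h$ as in \eqref{eq:discreteLikelihood}--\eqref{eq:discretePotential}) as an intermediate quantity and to split the total error into a deterministic discretization part and a random sampling part,
\begin{equation*}
	\frac{Q}{\Lambda} - \frac{\widehat{Q}_{h,N}}{\widehat{\Lambda}_{h,N}} = \underbrace{\left(\frac{Q}{\Lambda} - \frac{Q_h}{\Lambda_h}\right)}_{=:E_{\mathrm{disc}}} + \underbrace{\left(\frac{Q_h}{\Lambda_h} - \frac{\widehat{Q}_{h,N}}{\widehat{\Lambda}_{h,N}}\right)}_{=:E_{\mathrm{MC}}}.
\end{equation*}
Since $E_{\mathrm{disc}}$ is deterministic (both ratios are prior expectations), the elementary inequality $(a+b)^2\le 2a^2+2b^2$ gives $\mathrm{MSE}\le 2E_{\mathrm{disc}}^2 + 2\,\mathbb{E}_{\prior}[E_{\mathrm{MC}}^2]$, so it suffices to show $E_{\mathrm{disc}}^2\le C h^4|\log h|^2$ and $\mathbb{E}_{\prior}[E_{\mathrm{MC}}^2]\le C/N$ separately.

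For the discretization part I would write
\begin{equation*}
	E_{\mathrm{disc}} = \frac{Q-Q_h}{\Lambda} + \frac{Q_h}{\Lambda_h}\cdot\frac{\Lambda_h-\Lambda}{\Lambda}.
\end{equation*}
The denominators are bounded below uniformly in $h$ by Lemma \ref{lem:NormalizationConstant} together with its discrete counterpart stated in Remark \ref{rem:PropertiesDiscretization}, and $Q_h/\Lambda_h=\mathbb{E}_{\posteriorh}[\phi]$ is bounded because $\theta_h\le 1$ and $\phi\in L_{\prior}^{2}(U)\subset L_{\prior}^{1}(U)$. The numerators $Q-Q_h=\mathbb{E}_{\prior}[(\theta-\theta_h)\phi]$ and $\Lambda-\Lambda_h=\mathbb{E}_{\prior}[\theta-\theta_h]$ are estimated using the pointwise bound $\lvert\theta-\theta_h\rvert=\lvert e^{-\Psi}-e^{-\Psi_h}\rvert\le\lvert\Psi-\Psi_h\rvert$ (since $t\mapsto e^{-t}$ is $1$-Lipschitz on $[0,\infty)$) and the potential estimate $\lvert\Psi-\Psi_h\rvert\le C(1+\|y\|_\Sigma)\lvert\ln h\rvert h^2$ from Lemma \ref{lem:DiscretisationErrorPotential}. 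As $y$ is fixed, this yields $\lvert E_{\mathrm{disc}}\rvert\le C\lvert\ln h\rvert h^2$ and hence the desired $h^4|\log h|^2$ contribution after squaring.

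The genuinely delicate part is $E_{\mathrm{MC}}$, a ratio of two \emph{unbiased} Monte Carlo means whose denominator $\widehat{\Lambda}_{h,N}$ is \emph{random}. The observation that makes the analysis clean is that the likelihood is bounded below: by Lemma \ref{lem:PropertiesOfPotential}(i) and Remark \ref{rem:PropertiesDiscretization} one has $\Psi_h\le K+\|y\|_\Sigma^2$, so $\theta_h(Z,y)\ge\beta:=e^{-(K+\|y\|_\Sigma^2)}>0$ for every $Z$, whence $\widehat{\Lambda}_{h,N}=\tfrac1N\sum_i\theta_h(Z^i,y)\ge\beta$ almost surely, and likewise $\Lambda_h\ge\beta$. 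Using the algebraic identity
\begin{equation*}
	E_{\mathrm{MC}} = \frac{Q_h(\widehat{\Lambda}_{h,N}-\Lambda_h) - \Lambda_h(\widehat{Q}_{h,N}-Q_h)}{\Lambda_h\,\widehat{\Lambda}_{h,N}}
\end{equation*}
and the deterministic lower bound $\Lambda_h\widehat{\Lambda}_{h,N}\ge\beta^2$, together with $(a+b)^2\le 2a^2+2b^2$, reduces $\mathbb{E}_{\prior}[E_{\mathrm{MC}}^2]$ to the two sample variances $\mathbb{E}_{\prior}[(\widehat{\Lambda}_{h,N}-\Lambda_h)^2]=\tfrac1N\mathrm{Var}_{\prior}(\theta_h)$ and $\mathbb{E}_{\prior}[(\widehat{Q}_{h,N}-Q_h)^2]=\tfrac1N\mathrm{Var}_{\prior}(\theta_h\phi)$. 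These are bounded by $\tfrac1N\mathbb{E}_{\prior}[\theta_h^2]\le\tfrac1N$ and $\tfrac1N\mathbb{E}_{\prior}[\theta_h^2\phi^2]\le\tfrac1N\|\phi\|_{L_{\prior}^{2}(U)}^2$, respectively (here $\theta_h\le 1$ and $\phi\in L_{\prior}^{2}(U)$ are exactly what is needed), while $\lvert Q_h\rvert\le\|\phi\|_{L_{\prior}^{1}(U)}$ and $\Lambda_h\le 1$. Collecting terms gives $\mathbb{E}_{\prior}[E_{\mathrm{MC}}^2]\le C/N$, and combining this with the discretization estimate proves the claim. I expect the main obstacle to be controlling the random denominator: without the pointwise lower bound $\beta$ on $\theta_h$ one would be forced to discard a small-probability event $\{\widehat{\Lambda}_{h,N}\text{ small}\}$ through a concentration argument, whereas the bound $\beta$ removes this difficulty entirely.
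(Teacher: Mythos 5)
Your proof is correct, and every estimate you invoke is available in the paper (Lemma \ref{lem:NormalizationConstant}, Remark \ref{rem:PropertiesDiscretization}, Lemma \ref{lem:DiscretisationErrorPotential}, the Lipschitz bound $\lvert e^{-\Psi}-e^{-\Psi_h}\rvert\le\lvert\Psi-\Psi_h\rvert$, and $\theta_h\le 1$ with $\phi\in L^2_{\prior}(U)$ for the variance). The route differs from the paper's in the order of the splitting. You first insert the semi-discrete ratio $Q_h/\Lambda_h$, so that the discretization error is isolated as a purely deterministic quantity at the level of the ratio, and the remaining term $E_{\mathrm{MC}}$ is a pure sampling error whose second moment reduces to two Monte Carlo variances. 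The paper instead expands $Q/\Lambda-\widehat Q_{h,N}/\widehat\Lambda_{h,N}$ directly via the cross term $Q\widehat\Lambda_{h,N}-Q\Lambda+Q\Lambda-\widehat Q_{h,N}\Lambda$, reducing the MSE to $\mathbb{E}_{\prior}[(\widehat\Lambda_{h,N}-\Lambda)^2]$ and $\mathbb{E}_{\prior}[(Q-\widehat Q_{h,N})^2]$; each of these mixes the deterministic bias and the random fluctuation, which is why the paper needs the bias--variance orthogonality argument of Lemma \ref{lem:MSEsplitting} (the cross term vanishes because $\widehat Q_{h,N}$ is unbiased for $Q_h$). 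Your decomposition makes that orthogonality step unnecessary, at the small price of an extra algebraic identity for $E_{\mathrm{disc}}$. You are also more explicit than the paper on the one genuinely delicate point, the random denominator: the paper asserts that $\widehat\Lambda_{h,N}$ is ``bounded from below uniformly'' citing Lemma \ref{lem:NormalizationConstant} and Remark \ref{rem:PropertiesDiscretization}, whereas you spell out the underlying reason, namely the pointwise bound $\theta_h(Z,y)\ge e^{-(K+\lVert y\rVert_\Sigma^2)}$ forces the empirical mean to obey the same bound almost surely, so no small-denominator event needs to be excluded. Both arguments land on the same $C(h^4\lvert\log h\rvert^2+N^{-1})$ bound.
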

\begin{proof}
For the $\mathrm{MSE}$ the following holds
\begin{align*}
\mathrm{MSE}\left( \frac{\widehat{Q}_{h,N}}{\widehat{\Lambda}_{h,N}} \right)  &= \mathbb{E}_{\prior}\left[ \left( \frac{Q}{\Lambda} - \frac{\widehat{Q}_{h,N}}{\widehat{\Lambda}_{h,N}}\right)^{2} \right] = \mathbb{E}_{\prior}\left[ \left( \frac{Q\widehat{\Lambda}_{h,N}-Q\Lambda+Q\Lambda-\widehat{Q}_{h,N}\Lambda}{\Lambda \widehat{\Lambda}_{h,N}} \right)^{2} \right]\\
&\le 2 \mathbb{E}_{\prior}\left[ \left( \frac{Q\left( \widehat{\Lambda}_{h,N}-\Lambda\right)}{\Lambda \widehat{\Lambda}_{h,N}} \right)^{2} \right] + 2 \mathbb{E}_{\prior}\left[ \left( \frac{\left( Q-\widehat{Q}_{h,N}\right)\Lambda}{\Lambda\widehat{\Lambda}_{h,N}} \right)^{2} \right]\\
&= 2\mathbb{E}_{\prior}\left[ \left( \frac{Q}{\Lambda \widehat{\Lambda}_{h,N}} \right)^{2} \left(\widehat{\Lambda}_{h,N} - \Lambda\right)^{2} \right] + 2 \mathbb{E}_{\prior}\left[ \frac{1}{\widehat{\Lambda}_{h,N}^{2}}\left( Q-\widehat{Q}_{h,N}\right)^{2}\right]
\end{align*}
Since $Q=\mathbb{E}_{\prior}[\theta(\cdot,y)\phi] \le \mathbb{E}_{\prior}[\lvert\phi\rvert]$, we know that $Q$ is bounded if $\phi \in L_{\prior}^{1}(U)$. Moreover, Lemma \ref{lem:NormalizationConstant} and remark \ref{rem:PropertiesDiscretization} imply that for a fixed $y\in\mathbb{C}^{m}$ $\Lambda$ and that $\widehat{\Lambda}_{h,N}$ are bounded from below uniformly in $Z\in U$. Thus, the following estimate holds
\begin{equation*}
\mathrm{MSE}\left( \frac{\widehat{Q}_{h,N}}{\widehat{\Lambda}_{h,N}} \right) \le C(y,\kappa) \left( \mathbb{E}_{\prior}\left[ \left( \widehat{\Lambda}_{h,N} - \Lambda\right)^{2} \right] + \mathbb{E}_{\prior}\left[ \left( Q-\widehat{Q}_{h,N} \right)^{2} \right]\right)
\end{equation*}
Here, the last expression is well defined, since $\phi \in L_{\prior}^{2}(U)$. The following Lemma \ref{lem:MSEsplitting} concludes the proof by showing the convergence results for 
\[
\mathbb{E}_{\prior}\left[ \left( Q-\widehat{Q}_{h,N} \right)^{2} \right].
\]
The convergence for the other summand, i.e., the MSE of $\Lambda$, follows since $\Lambda$ corresponds to a special case of $Q$ with $\phi \equiv 1$.
\end{proof}
The following Lemma concludes the proof of Theorem \ref{thm:MSE} by analyzing the MSE for $Q$. This is done by splitting the MSE into bias and variance and showing convergence of these quantities.
\begin{lemma}
\label{lem:MSEsplitting}
For the MSE the following splitting holds 
\begin{equation}\label{MSEsplitting}
\mathrm{MSE}(\widehat{Q}_{h,N}) = \mathbb{E}_{\prior}\left[ (Q-\widehat{Q}_{h,N} )^{2} \right]= \mathrm{Bias}(\widehat{Q}_{h,N})^{2} + \mathrm{\mathbb{V}ar}(\widehat{Q}_{h,N})
\end{equation}
where 
\begin{align}
\mathrm{Bias}(\widehat{Q}_{h,N}) &= |Q - \mathbb{E}_{\prior}[\widehat{Q}_{h,N} ] | \le C_B h^{2} \lvert \log h \rvert, \label{bias-bnd}\\
\mathrm{\mathbb{V}ar}(\widehat{Q}_{h,N}) &= \mathbb{E}_{\prior}\left[ \left(Q_{h} - Q_{h,N}\right)^{2} \right] \le \frac{C_V}{N}\label{var-bnd}
\end{align}
and constants $C_B$ and $C_V$ independent of $h$ and $N$.
\end{lemma}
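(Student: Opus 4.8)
The plan is to use the standard bias--variance decomposition of the mean squared error and then to bound the two resulting terms separately: the bias via the potential discretization error of Lemma~\ref{lem:DiscretisationErrorPotential}, and the variance via the elementary bound $\theta_h \le 1$ together with $\phi \in L^2_{\prior}(U)$. First I would record that $\widehat{Q}_{h,N} = \frac{1}{N}\sum_{i=1}^N \theta_h(Z^i,y)\phi(Z^i)$ is an empirical mean of i.i.d.\ samples, so $\mathbb{E}_{\prior}[\widehat{Q}_{h,N}] = Q_h$. Adding and subtracting the deterministic quantity $Q_h$ inside the square gives
\begin{align*}
\mathbb{E}_{\prior}\!\left[(Q-\widehat{Q}_{h,N})^{2}\right]
&= (Q-Q_h)^{2} + 2(Q-Q_h)\,\mathbb{E}_{\prior}[Q_h-\widehat{Q}_{h,N}] + \mathbb{E}_{\prior}\!\left[(Q_h-\widehat{Q}_{h,N})^{2}\right],
\end{align*}
and the cross term vanishes since $\mathbb{E}_{\prior}[\widehat{Q}_{h,N}]=Q_h$. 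This is exactly the splitting \eqref{MSEsplitting} with $\mathrm{Bias}(\widehat{Q}_{h,N}) = |Q-Q_h|$ and $\mathrm{\mathbb{V}ar}(\widehat{Q}_{h,N}) = \mathbb{E}_{\prior}[(Q_h-\widehat{Q}_{h,N})^{2}]$.

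For the bias bound \eqref{bias-bnd}, the key observation is that the likelihood is Lipschitz in the potential with constant one: since $\Psi(Z,y),\Psi_h(Z,y)\ge 0$, the elementary estimate $|e^{-a}-e^{-b}|\le |a-b|$ for $a,b\ge 0$ yields $|\theta(Z,y)-\theta_h(Z,y)|\le |\Psi(Z,y)-\Psi_h(Z,y)|$. Hence
\begin{align*}
|Q-Q_h| = \left|\mathbb{E}_{\prior}[(\theta-\theta_h)\phi]\right|
\le \mathbb{E}_{\prior}\!\left[|\Psi-\Psi_h|\,|\phi|\right]
\le C(1+\lVert y\rVert_{\Sigma})\,\lvert\ln h\rvert\,h^{2}\,\mathbb{E}_{\prior}[|\phi|],
\end{align*}
where Lemma~\ref{lem:DiscretisationErrorPotential} is used in the last step. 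Since $\prior$ is a probability measure, Cauchy--Schwarz gives $\mathbb{E}_{\prior}[|\phi|]\le \lVert\phi\rVert_{L^{2}_{\prior}(U)}<\infty$, so $|Q-Q_h|\le C_B h^{2}\lvert\log h\rvert$ with $C_B$ depending on $y$, $\kappa$ and $\lVert\phi\rVert_{L^{2}_{\prior}(U)}$ but independent of $h$ and $N$.

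For the variance bound \eqref{var-bnd} I would again use that $\widehat{Q}_{h,N}$ is the empirical mean of $N$ i.i.d.\ copies of $\theta_h(Z,y)\phi(Z)$, so that
\begin{align*}
\mathrm{\mathbb{V}ar}(\widehat{Q}_{h,N}) = \frac{1}{N}\left(\mathbb{E}_{\prior}[(\theta_h\phi)^{2}] - Q_h^{2}\right)
\le \frac{1}{N}\,\mathbb{E}_{\prior}[(\theta_h\phi)^{2}]
\le \frac{1}{N}\,\mathbb{E}_{\prior}[\phi^{2}],
\end{align*}
where the last inequality follows from $0\le \theta_h\le 1$. Setting $C_V = \lVert\phi\rVert_{L^{2}_{\prior}(U)}^{2}<\infty$ gives the claim. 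I do not expect a serious obstacle here, since all the analytic difficulty has been absorbed into the earlier discretization estimates; the only delicate points are transferring the $O(h^{2}\lvert\log h\rvert)$ potential error to the likelihood through the one-Lipschitz bound on $z\mapsto e^{-z}$ on $[0,\infty)$, and verifying that $\phi\in L^{2}_{\prior}(U)$ controls both $\mathbb{E}_{\prior}[|\phi|]$ and $\mathbb{E}_{\prior}[\phi^{2}]$ under the probability measure $\prior$, both of which are routine.
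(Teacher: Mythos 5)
Your proposal is correct and follows essentially the same route as the paper: the bias--variance splitting via the unbiasedness $\mathbb{E}_{\prior}[\widehat{Q}_{h,N}]=Q_h$, the bias bound via the $1$-Lipschitz continuity of $\exp(-\cdot)$ on $[0,\infty)$ combined with Lemma~\ref{lem:DiscretisationErrorPotential}, and the variance bound via $0\le\theta_h\le 1$ giving $C_V=\mathbb{E}_{\prior}[\phi^2]$. The only cosmetic difference is that you keep $|\Psi-\Psi_h|$ inside the expectation and invoke Cauchy--Schwarz for $\mathbb{E}_{\prior}[|\phi|]$, whereas the paper pulls out a supremum over $Z$; both are valid.
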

\begin{proof}
Observe that
\begin{align*}
\mathrm{MSE}(\widehat{Q}_{h,N}) &= \mathbb{E}_{\prior} [ (Q-\widehat{Q}_{h,N} )^{2}  ] = \mathbb{E}_{\prior} [ (Q-Q_{h})^{2}  ] + \mathbb{E}_{\prior} [ (Q_{h} - \widehat{Q}_{h,N})^{2} ]
\end{align*}
since $Q$ and $Q_h$ are deterministic and therefore
\begin{align*}
&\mathbb{E}_{\prior} [(Q-Q_{h})(Q_{h}-\widehat{Q}_{h,N})  ] = (Q-Q_{h})(Q_{h}-\mathbb{E}_{\prior}[\widehat{Q}_{h,N}]) = 0,
\end{align*}
where we used that $\widehat{Q}_{h,N}$ is an unbiased estimator of $Q_h$. 	
This implies \eqref{MSEsplitting} since $Q$ and $Q_h = \mathbb{E}_{\prior}[\widehat{Q}_{h,N} ]$ are deterministic.
The estimate \eqref{bias-bnd} follows from
Lemma \ref{lem:DiscretisationErrorPotential} and the Lipschitz continuity of $\exp(-\cdot)$ on $[0,\infty)$, since
\begin{align*}
\mathrm{Bias}(\widehat{Q}_{h,N}) &= |\mathbb{E}_{\prior}\left[ Q-Q_{h}\right] | 
\leq \mathbb{E}_{\prior}\left[  |\phi(Z)| \;|\theta(Z,y)-\theta_{h}(Z,y)| \right]\\
& \le  \mathbb{E}_{\prior}[|\phi(Z)|] \sup_{Z\in \mathop{\rm supp}(\prior)}\left\lvert\Psi(Z,y)-\Psi_{h}(Z,y)\right\rvert \\
& \le C \;\mathbb{E}_{\prior}[\phi(Z)] \; (1+\lVert y \rVert_{\Sigma})\; h^{2} \lvert \log h \rvert.
\end{align*}
Concerning the variance, we observe
\begin{align*}
\mathrm{\mathbb{V}ar}(\widehat{Q}_{h,N}) &= \mathbb{E}_{\prior}\left[ \left( \mathbb{E}_{\prior}\left[ \phi(Z) \theta_{h}(Z,y) \right] - E_{N}\left[ \phi(Z) \theta_{h}(Z,y) \right] \right)^{2}\right]\\
&= \mathbb{E}_{\prior} \left[ \left( \frac{1}{N} \sum\limits_{i=1}^{N} \bigg(\phi(Z^{i}) \theta_{h}(Z^{i},y) - \mathbb{E}_{\prior}\left[ \phi(Z^{i}) \theta_{h}(Z^{i},y)\right] \bigg)\right)^{2} \right]
\end{align*}
where $Z^{i}$ are independent copies of $Z$. This implies
\begin{align*}
\mathrm{\mathbb{V}ar}(\widehat{Q}_{h,N}) &=\frac{1}{N^2}  
\mathbb{E}_{\prior} \left[ \bigg( \sum_{i=1}^N \phi(Z^i) \theta_{h}(Z^i,y) - \mathbb{E}_{\prior}\left[ \phi(Z^i) \theta_{h}(Z^i,y)\right] \bigg)^{2} \right] 
\\
&= \frac{1}{N} \mathbb{E}_{\prior}\left[ \left(\phi(Z) \theta_{h}(Z,y) - \mathbb{E}_{\prior}\left[ \phi(Z) \theta_{h}(Z,y)\right] \right)^{2} \right] \leq \frac{C_{V}}{N}.
\end{align*}
Since $\theta_{h}(Z)\le 1$ for all $Z \in U$, the last inequality is satisfied with $C_V = \mathbb{E}_{\prior} [ \phi(Z)^2]$. This implies \eqref{var-bnd} and concludes the proof thereby.
\end{proof}

\section{Numerical experiments}
\label{sec:numerics}
In this section, we demonstrate the numerical performance of our approach using various model problems in two and three space dimensions. In all experiments the aim is to determine the complex-valued acoustic impedance parameter $Z = Z_R + iZ_I$ and thereby fit the impedance boundary condition
\begin{equation}\label{IBC}
\frac{\partial p}{\partial n} + \frac{i \omega \rho}{Z} p = 0 \quad \text{ on } \Gamma_R
\end{equation}
at the part of the boundary $\Gamma_R \subset \partial D$. The parameter $Z$ is assumed to be constant (in  section \ref{subsect:numMSE} piecewise constant) on $\Gamma_R$. 
\begin{figure}[htbp]
\centering
\begin{tikzpicture}[scale=1]	
\coordinate (A1) at (0cm,0cm);
\coordinate (A2) at (3cm,0cm);

\coordinate (A3) at (3cm,3.5cm);
\coordinate (A4) at (0cm,3.5cm);

\fill[blue!60, opacity=0.2] (A1) -- (A2) -- (A3) -- (A4) -- cycle;
\draw[thick] (A1) -- (A2) -- (A3) -- (A4) -- (A1);

\draw[thick,arrows=->] (A1) -- (3.5cm,0) node[right] {$$};
\draw[thick,arrows=->] (A1) -- (0,4cm) node[left] {$$};

\node at (3cm,-0.25cm) {$3$};
\node at (-0.3cm,3.5cm) {$3.5$};

\draw[thick,blue] (A1) -- (A4);
\draw[thick,red] (A1) -- (A2);
\node at (-0.3cm,1.75cm) {\textcolor{blue}{$\Gamma_{R}^{(1)}$}};
\node at (1.5cm,-0.25cm) {\textcolor{red}{$\Gamma_{R}^{(2)}$}};
\node at (3.1cm,3.7cm) {$\Gamma_{N}$};

\coordinate (B1) at (0.25cm,0.25cm);
\coordinate (B2) at (2.75cm,0.25cm);
\coordinate (B3) at (2.75cm,3.25cm);
\coordinate (B4) at (0.25cm,3.25cm);

\fill[blue!60, opacity=0.4] (B1) -- (B2) -- (B3) -- (B4) -- cycle;
\draw (B1) -- (B2) -- (B3) -- (B4) -- (B1);

\foreach \x in {3,...,27}
\foreach \y in {3,...,32}
{
	\fill (\x*0.1cm,\y*0.1cm) circle (1pt);
}

\filldraw[fill=white] (1cm,1cm) circle (15pt);
\filldraw[fill=blue!60, opacity=0.2, draw=black] (1cm,1cm) circle (15pt);
\filldraw[fill=red!40!white, draw=black] (1cm,1cm) circle (10pt);
\filldraw[fill=red!40!white, draw=black] (1cm,1cm) circle (1pt);

\draw[black] (1cm,1cm) -- (-0.5cm,1cm);
\node at (-1.3cm,1cm) {\textcolor{black}{source}};
\draw[red] (0.9cm,0.9cm) -- (-0.5cm,0.7cm);
\node at (-0.6cm,0.6cm) {\textcolor{red}{$D_{\kappa}$}};
\draw[blue] (2cm,2cm) -- (3.5cm,3cm);
\node at (3.6cm,3.2cm) {\textcolor{blue}{$M_{\kappa}$}};

\end{tikzpicture}
\caption{Domain for model problem in 2D. Black dots indicate the grid of possible microphone positions.}
\label{fig:2d}
\end{figure}
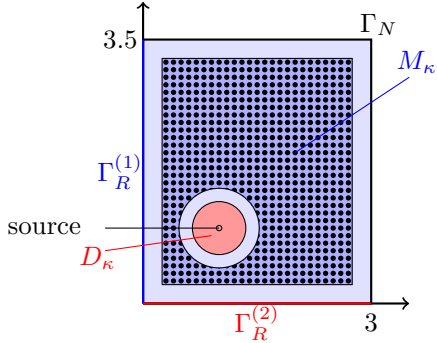
\subsection{The general experimental setup}
\label{ubsect:numSetup}
We consider a convex room $D \subset \mathbb{R}^d$, which is either a rectangle for $d=2$ or a cuboid for $d=3$, with a fixed sound point source at $s$ within the source domain $D_\kappa\subset D$ and $m$ microphones placed at $m$ different locations $x_1,\dots,x_m$ within the measurement domain $M_\kappa \subset D \setminus D_\kappa$. The $m$ measurement positions are randomly chosen in $M_{\kappa}$, based on a regular grid with $L$ points in the room, see figure \ref{fig:2d}.  We draw $m$ samples from a discrete uniform distribution with sample space $\{1,\dots,L\}$, where the indices of points that are too close (i.e., the distance between them is smaller than $\kappa$) to the source or boundary $\partial D$ are excluded.  To ensure consistency across experiments, the sound source location remains the same for a fixed experimental scenario. The computational algorithm is outlined as Algorithm \ref{alg1}.

\begin{algorithm}
\begin{itemize}
\item[(1)] For a given source of acoustic excitation get noisy measurements $y = (y_1,\dots,y_m)$, where $y_j$ is the point value of the acoustic pressure $p(x_j)$ at a point in the measurement domain $x_j \in M_\kappa$.
\item[(2)] Describe the a priori knowledge of the parameter $Z$ by the prior density $\nu_0$.
\item[(3)] Draw $N$ independent samples $Z^1,\dots, Z^N$ of the parameter $Z$ distributed according to the prior density $\prior$.
\item[(4)] For each sample $Z^i$ solve numerically the model problem \eqref{eq:model_problem} with the local impedance boundary condition on $\Gamma_R$. Collect the point values $\mathcal{G}_h(Z^i) = (p_h(x_1),\dots,p_h(x_m))$, where $p_h$ is the deterministic numerical solution of \eqref{eq:model_problem} for $Z = Z^i$.
\item[(5)] Evaluate the likelihood $\theta_h(Z^i,y)$ of the sample $Z^i$ according to \eqref{eq:discreteLikelihood}, \eqref{eq:discretePotential}.
\item[(6)] Evaluate the posterior mean and the variance of $Z_* \in \{Z_R, Z_I\}$ by the ratio estimator \eqref{eq:ratioEstimator}, i.e., $\widehat{\Lambda}_{h,N} = \frac{1}{N} \sum_{i=1}^N \theta_h(Z^i,y)$ and
\begin{equation} \label{approxMoments}
	\begin{aligned}
		\widehat{\mathcal{M}}^1_y[Z_*]_{h,N} &= \frac{1}{\widehat{\Lambda}_{h,N}} \left( \frac{1}{N} \sum_{i=1}^N Z_*^i \theta_h(Z^i_*,y) \right), \\
		\widehat{\mathcal{M}}^2_y[Z_*]_{h,N} &= \frac{1}{\widehat{\Lambda}_{h,N}} \left( \frac{1}{N} \sum_{i=1}^N (Z_*^i - \widehat{\mathcal{M}}^1_y[Z_*]_{h,N})^2 \theta_h(Z^i,y) \right).
	\end{aligned}
\end{equation}
\end{itemize}
\caption{Bayesian parameter identification} \label{alg1}
\end{algorithm}
Steps (3)--(6) can be realized consecutively or in parallel, dependening of the available computer architecture. 

Note that computed posterior moments \eqref{approxMoments} are random variables themselves. In particular, their values change for different samples $Z^i$ of $Z$. To demonstrate the validity of Theorem~\ref{thm:MSE} we repeat (3)--(6) multiple times 
to estimate the mean square error (MSE); see section \ref{subsect:numMSE} below. In the other examples we fit a particular probability density $\posterior$ to reproduce the posteriori moments \eqref{approxMoments} and discuss the outcomes. 

Clearly, the idealized local impedance boundary condition \eqref{IBC} does not always correctly describe the sound radiation and absorption at $\Gamma_R$. For significant model-data misfit, one would naturally expect failure of the Bayesian algorithm. In section \ref{subsect:3d-nonlocal} we demonstrate that the inconsistency of the parametric model is not always detected, i.e., for certain frequencies our algorithm finds values of $Z$ having high likelihood, even when the impedance condition \eqref{IBC} does not capture the physics at $\Gamma_R$. As we will see, the reasons for this somewhat unexpected behaviour are related to the actual microphone positions and the distribution of the eigenmodes of the physical model and their local structure.

It is natural that the demonstration of the effects related to consistency or inconsistency of the particular data sets $y = (y_1,\dots,y_m)$ and the parametric model \eqref{IBC} require full control over the generation and reproducibility of the noisy data $y$. That is why synthetic data generated from computer simulations are used in our experiments and not real measurements. We remark however, that our algorithm is designed to work with real measurements.

\subsection{On the selection of the prior and fitted posterior densities}
\label{subsect:numPrior}
As already mentioned above, the impedance parameter $Z = Z_R + iZ_I$ describes a passive absorbing surface. Mathematically this means $Z_R > 0$, whereas $Z_I$ is constraint-free. In other words, $Z$ can take values on the right complex half-plane. This justifies the selection of the log-normal prior density for $Z_R$ and the normal priori density for $Z_I$, that is
\begin{equation}\label{priorDensity}
\log Z_{R} \sim \mathcal{N}(\mu_{R},\gamma_{R}^{2}) \quad \text{ and } \quad Z_{I}\sim \mathcal{N}(\mu_{I},\gamma_{I}^{2})
\end{equation}
for the means $\mu_R$, $\mu_I$ and standard deviations $\gamma_R$, $\gamma_I$. Notice that according to \eqref{priorDensity} the real part $Z_R$ is not uniformly positive, as opposed to the requirement $Z_{R}\ge C >0$ imposed in the previous sections by assuming $Z\in U$, defined in \eqref{eq:U}. However, we have not detected any significant change in the performance of our algorithm, related to the presence of the uniform positivity condition, and therefore work with the model \eqref{priorDensity} in what follows. The joint probability density function is given by (see \cite{FlZu06})
\begin{align*}
\prior(Z_{R},Z_{I})  
= \frac{1}{2\pi \gamma_R\gamma_I} \frac{1}{Z_{R}} \exp\left(-\frac{(\log Z_{R}-\mu_{R})^2}{2 \gamma_R^{2}} -\frac{(Z_{I}-\mu_{I})^2}{2 \gamma_I^{2}} \right).
\end{align*}
Since the problem \eqref{eq:model_problem} has a unique solution of every $Z$, it is natural to expect that the posterior density will be unimodal if the data $y=(y_1,\dots,y_m)$ are consistent with the parametric impedance model \eqref{IBC}.  Similarly as $\prior$, the posterior density should be supported in the right half-plane. In view of this arguments, we approximate the posterior density $\posterior$ by
\begin{equation}\label{eq:posteriorhat}
\posteriorhat(Z_{R},Z_{I})  
= \frac{1}{2\pi \widehat{\gamma}_R\widehat{\gamma}_I} \frac{1}{Z_{R}} \exp\left(-\frac{(\log Z_{R}-\widehat{\mu}_{R})^2}{2 \widehat{\gamma}_R^{2}} -\frac{(Z_{I}-\widehat{\mu}_{I})^2}{2 \widehat{\gamma}_I^{2}} \right),
\end{equation}
where the parameters $\widehat{\mu}_R,\widehat{\mu}_I,\widehat{\gamma}_R,\widehat{\gamma}_I$ are fitted to reproduce the posterior moments \eqref{approxMoments} by

\begin{equation}\label{eq:postMoments}
\begin{aligned}
\widehat{\mu}_{R} &= \log \left(\frac{\widehat{\mathcal{M}}_{\posterior}^{1}[Z_{R}]^{2}_{h,N}}{\sqrt{\widehat{\mathcal{M}}_{\posterior}^{2}[Z_{R}]_{h,N}  +\widehat{\mathcal{M}}_{\posterior}^{1}[Z_{R}]_{h,N}^{2}}}\right),
&\widehat{\mu}_{I} &= \widehat{\mathcal{M}}_{\posterior}^{1}[Z_{I}]_{h,N},\\
\widehat{\gamma}_{R} &= \sqrt{\log\left(1+\frac{\widehat{\mathcal{M}}_{\posterior}^{2}[Z_{R}]_{h,N}}{\widehat{\mathcal{M}}_{\posterior}^{1}[Z_{R}]_{h,N}}\right)},
&\widehat{\gamma}_{I} &= \sqrt{\widehat{\mathcal{M}}_{\posterior}^{2}[Z_{I}]_{h,N}}.
\end{aligned}
\end{equation}
Note that all quantities with a hat $\widehat{\cdot}$ depend on the discretization and sampling parameters $h$ and $N$. For brevity, we will not explicitly track these parameters in the notation.  Although there is no indication that $\posteriorhat$ is close to $\posterior$, we will see in section \ref{subsect:numMSE} that the maximum of $\posteriorhat$ reproduces the reference value of $Z$ surprisingly well if the data $y=(y_1,\dots,y_m)$ are consistent with the impedance model \eqref{IBC}.

\subsection{2D model: Discretization and sampling error} \label{subsect:numMSE}
To validate the results from the previous sections we consider the model problem in a two-dimensional room of 3\,m width and 3.5\,m length, i.e., $D \coloneqq [0,3]\times[0,3.5]\subset\mathbb{R}^{2}$:
\begin{equation}
\label{eq:model_problem2D}
\left\{
\begin{split}
-\Delta p -k^2 p \quad=\quad f  &\quad\text{ in } D,\\
\frac{\partial p}{\partial n} + \frac{i\omega \rho}{Z^{(1)}} p \quad=\quad 0 &\quad\text{ on } \Gamma_{R}^{(1)},\\
\frac{\partial p}{\partial n} + \frac{i\omega \rho}{Z^{(2)}} p \quad=\quad 0 &\quad\text{ on } \Gamma_{R}^{(2)},\\
\frac{\partial p}{\partial n} \quad=\quad 0 &\quad\text{ on } \Gamma_{N}.
\end{split}
\right.
\end{equation}
Here, the boundary $\partial D$ of $D$ is split into three disjoint parts $\Gamma_{R}^{(1)} = \{ (x_{1},x_{2}) \in \partial D: x_{1} = 0\}$, $\Gamma_{R}^{(2)} = \{ (x_{1},x_{2}) \in \partial D: x_{2} = 0\}$  and $\Gamma_{N} = \partial D \backslash(\Gamma_{R}^{(1)}\cup \Gamma_{R}^{(2)})$, see figure \ref{fig:2d}. We assume that the acoustic impedances $Z^{(1)} = Z_{R}^{(1)} + iZ_{I}^{(1)}$ and $Z^{(2)} = Z_{R}^{(2)} + iZ_{I}^{(2)}$, $Z_{R}^{(\ell)}, Z_{I}^{(\ell)}\in \mathbb{R}$ for $\ell = 1,2$, are constant on the specific part of the boundary. The point source $f=\delta^{\var}$ is located at $s=(1,1)\in D_{\kappa}$. The data are computed by numerically solving problem \eqref{eq:model_problem2D} with reference acoustic impedance values, namely $Z_{\mathrm{ref}}^{(1)} =400-700i$ and $Z_{\mathrm{ref}}^{(2)} = 500+800i$, evaluating the solution at $m=4$ measurement positions and adding artificial noise, i.e., $y = \mathcal{G}(Z_{\mathrm{ref}}) + \eta$, where $\eta \sim \mathcal{CN}(0,\Sigma,0)$. $\Sigma$ is a diagonal matrix with entries $\sigma_{k}^{2}$, $k=1,\dots,2m$, where $\sigma_{k}=0.02$ for all $k$. This means that the log-likelihood can be expected to be of the order of $-M$ if the sampling and discretization errors are neglected, since then $\mathbb{E}_{\prior}[\log\theta] = \mathbb{E}_{\prior}[-\frac{1}{2} \lVert \eta \rVert_{\Sigma}^{2}] = M$. The prior distribution was chosen as described in section \ref{subsect:numPrior} with $\log Z_{R}^{(1)} \sim \mathcal{N}(300,200^2)$, $Z_{I}^{(1)} \sim \mathcal{N}(-600,200^2)$, $\log Z_{R}^{(2)} \sim \mathcal{N}(600,200^2)$ and $Z_{I}^{(2)} \sim \mathcal{N}(900,200^2)$.

To confirm the theoretical convergence rates from Theorem \ref{thm:MSE} we compute $\widehat{M}^{1}_{\posterior}[Z_{*}^{(\ell)}]$ and $\widehat{M}_{\posterior}^{2}[Z_{*}^{(\ell)}]$, i.e., we use $\phi(Z_{*}^{(\ell)})=Z_{*}^{(\ell)}$ or $\phi(Z_{*}^{(\ell)}) = (Z_{*}^{(\ell)}-\widehat{M}_{\posterior}^{1}[Z_{*}^{(\ell)}])^2 $ in Theorem \ref{thm:MSE}. These let us compute the estimates for $\widehat{\mu}_{R}^{(\ell)}$,$\widehat{\gamma}_{R}^{(\ell)}$, $\widehat{\mu}_{I}^{(\ell)}$ and $\widehat{\gamma}_{I}^{(\ell)}$ with equations \eqref{eq:postMoments}.

First, consider the discretization error, i.e., $\lvert \mu_{*}^{(\ell)} - \widehat{\mu}_{*}^{(\ell)}\rvert$ and $\lvert \gamma_{*}^{(\ell)} - \widehat{\gamma}_{*}^{(\ell)}\rvert$, $\ell=1,2$, where we recall that  $\widehat{\mu}_{*}^{(\ell)}$ and $\widehat{\gamma}_{*}^{(\ell)}$ depend on the discretization parameters $h,N$. For the estimation $N=2^{16}$ samples are drawn. The exact expected values $\mu_{*}^{(\ell)}$ and $\gamma_{*}^{(\ell)}$, $\ell=1,2$, are approximated on a small grid with $h=\frac{c/f}{10 \cdot 2^5}\approx 0.021$\,m for reference. The error visualized in figure \ref{fig:DiscretizationError} is averaged over 20 runs, i.e., different set of drawn samples, while the positions of the measurements and the noise (and therefore the data $y$) stay fixed for all runs. As seen from figure \ref{fig:DiscretizationError}, the convergence behaviour confirms the theoretically predicted convergence rates in Theorem \ref{thm:MSE}.\\

\begin{figure}[htbp]
\centering
\includegraphics[width=1\textwidth]{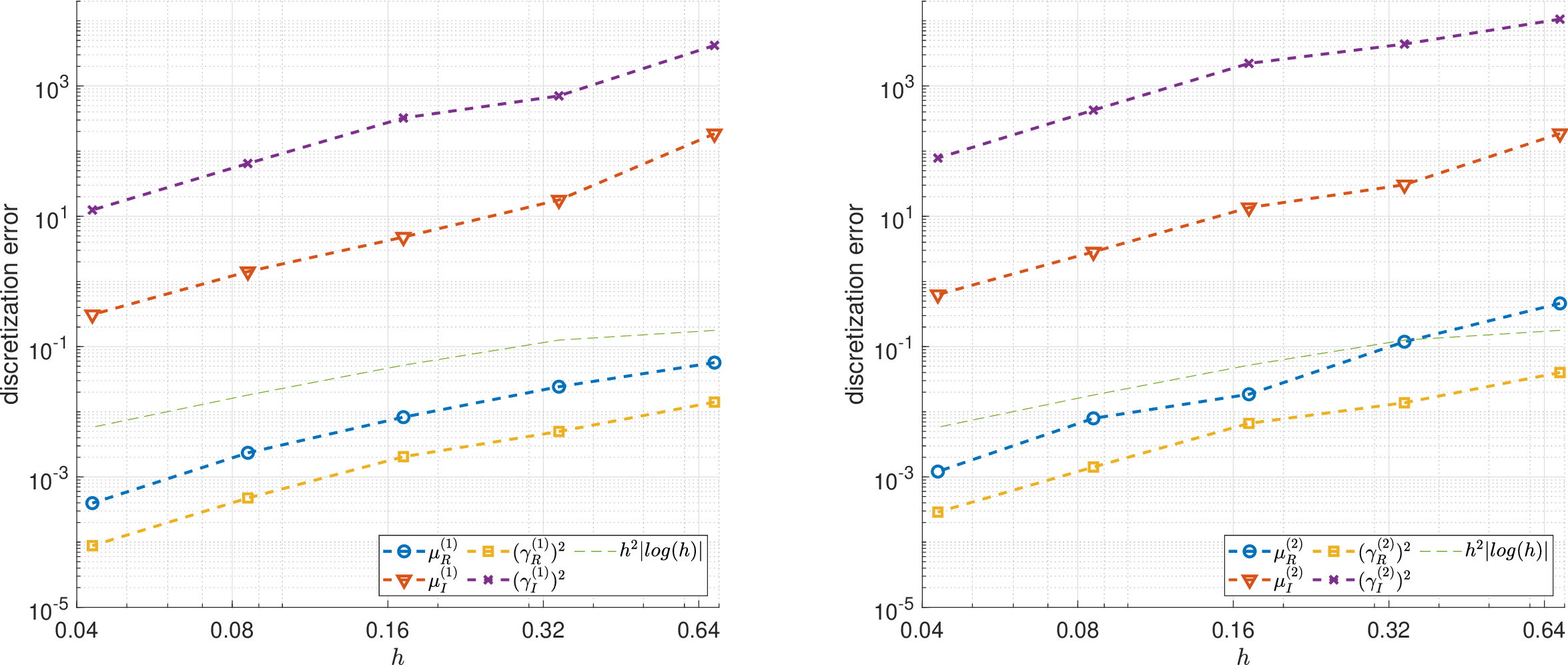}
\caption{Discretization error for statistical parameters of 
$Z^{(1)}$ (left) and $Z^{(2)}$ (right) for $f=50\,$Hz.}
\label{fig:DiscretizationError}
\end{figure}

Next, we address the sampling error. The exact values are approximated with large sample size of $N=2^{16}$. The computations for the sampling error were all done on a grid with mesh size $h=\frac{c/f}{20}\approx 0.34$\,m and frequency $f=50$\,Hz. In figure \ref{fig:SamplingError} the sampling error is averaged over 20 runs. We observe the convergence rate of $\frac{1}{\sqrt{N}}$ as expected from Theorem \ref{thm:MSE}.

\begin{figure}[htbp]
\centering
\includegraphics[width=0.9\textwidth]{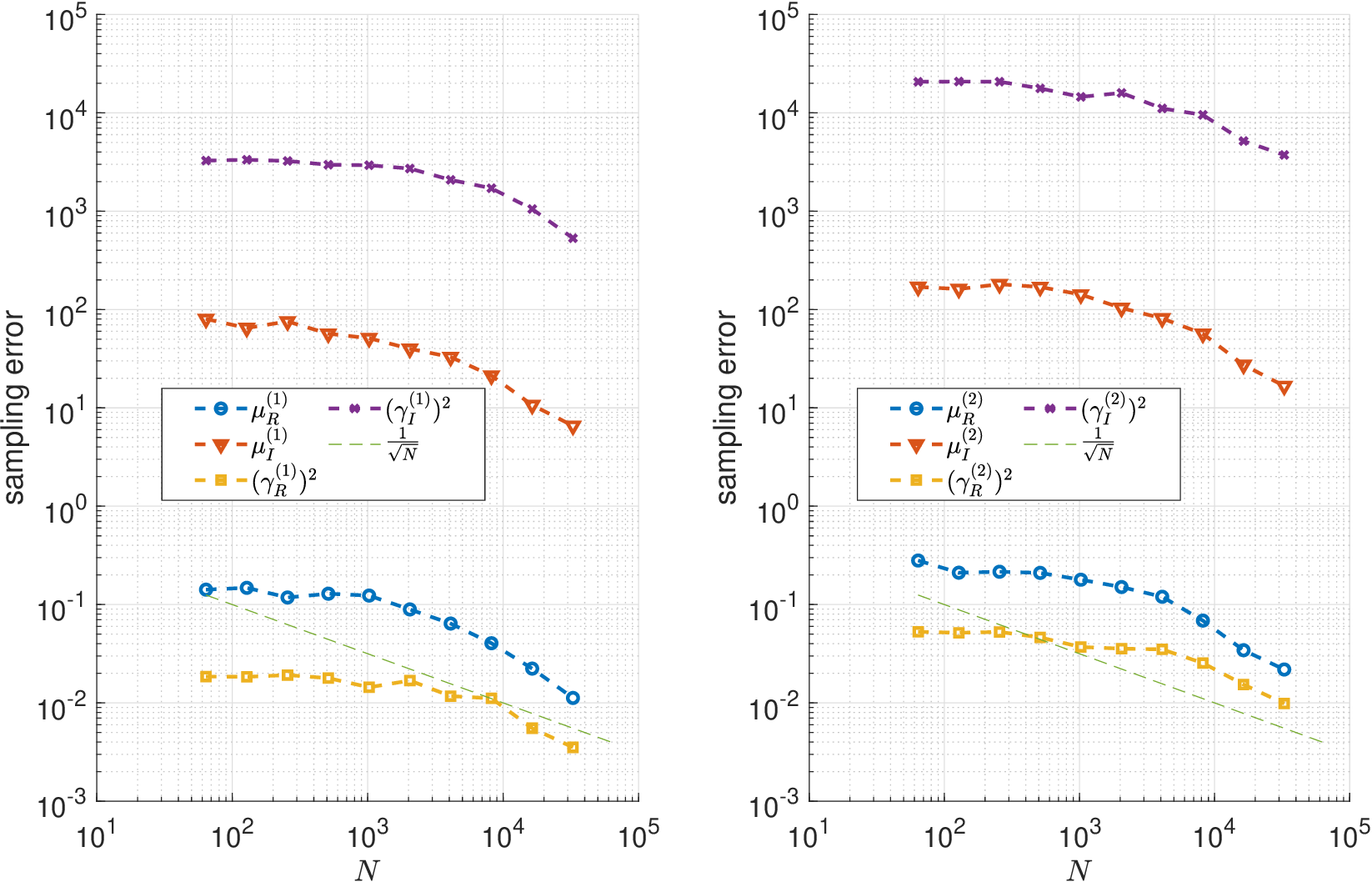}
\caption{Sampling error for statistical parameters of 
$Z^{(1)}$ (left) and $Z^{(2)}$ (right) for $f=50\,$Hz.}
\label{fig:SamplingError}
\end{figure}
\begin{figure}[htbp]
\centering
\includegraphics[width=1\textwidth]{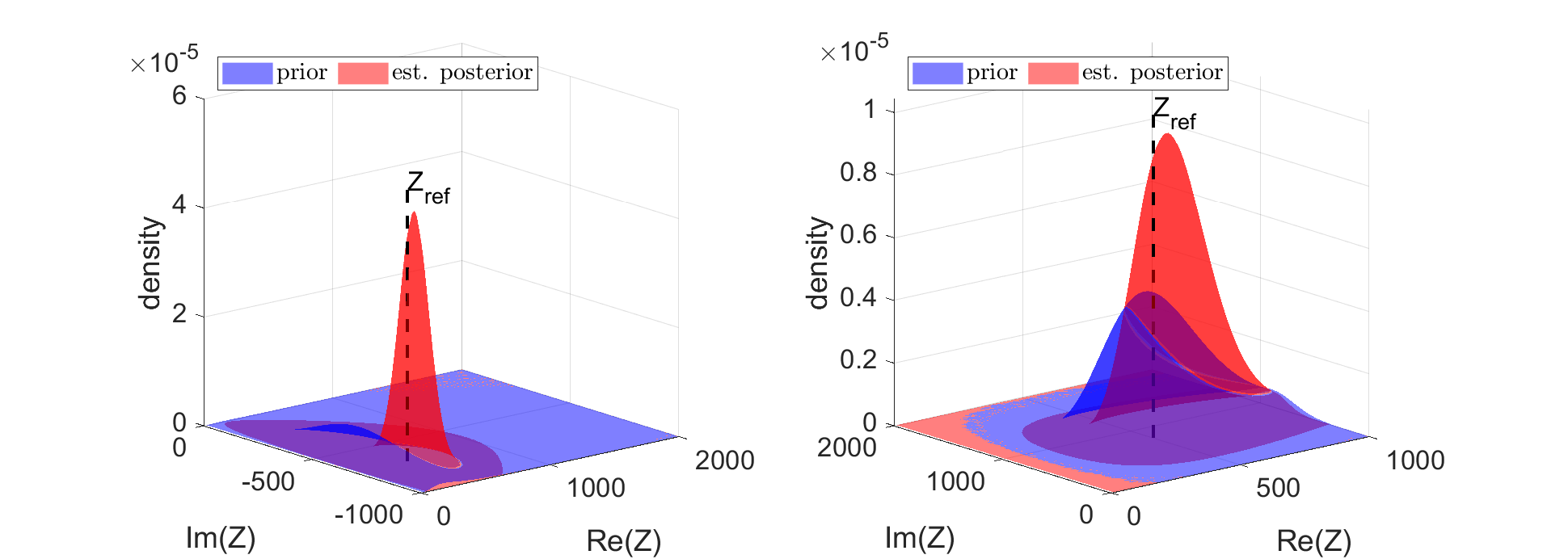}
\caption{Prior density and density given by estimated expected value and standard deviation for $f=50$\,Hz for $Z^{(1)}$ (left) and $Z^{(2)}$ (right).}
\label{fig:density2D_1}
\end{figure}

\begin{figure}[htbp]
\centering
\includegraphics[width=1\textwidth]{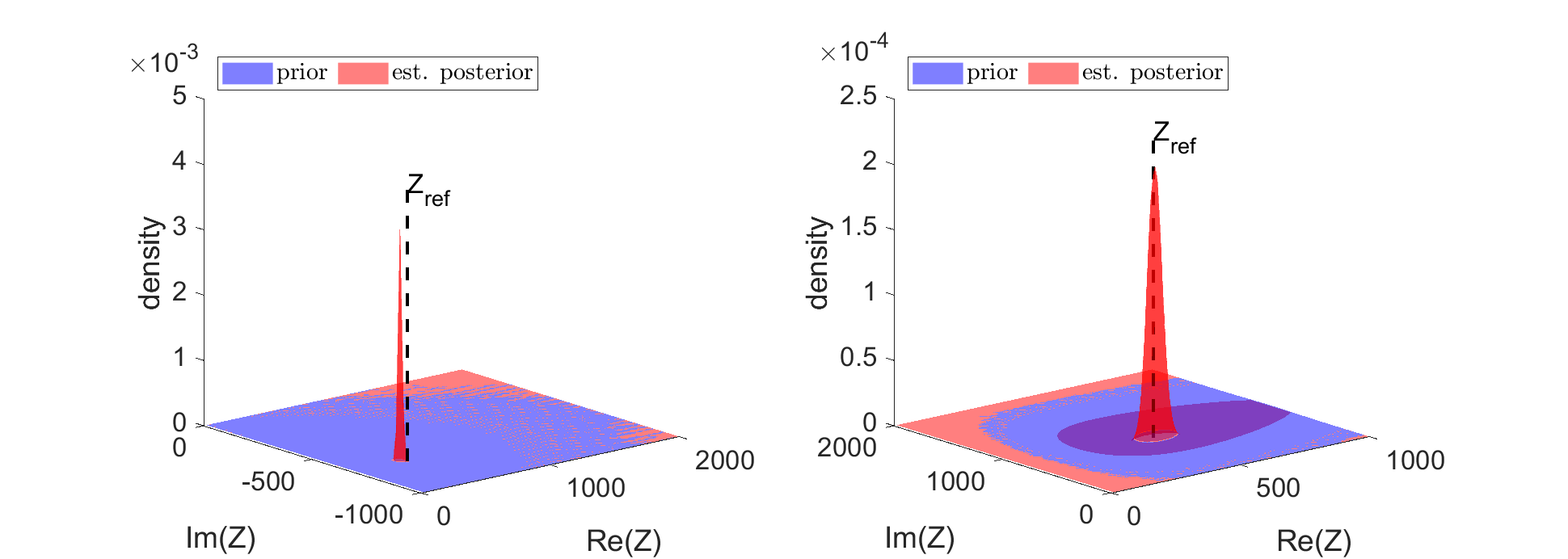}
\caption{Prior density and density given by estimated expected value and standard deviation for $f=100$\,Hz for $Z^{(1)}$ (left) and $Z^{(2)}$ (right).}
\label{fig:density2D_2}
\end{figure}

Given the estimations on the first and second moments of $Z_{R}^{(\ell)}$ and $Z_{I}^{(\ell)}$, $\ell=1,2$, we can approximate the posterior density $\posterior$ by $\posteriorhat$ defined in equation \eqref{eq:posteriorhat} as described in section section \ref{subsect:numPrior}. For $N=2^{16}$ samples and $h=\frac{c/f}{20}$ we again compute the estimators given in \eqref{eq:postMoments}. In figures \ref{fig:density2D_1} and \ref{fig:density2D_2} we compare the approximate posterior density $\posteriorhat$ with the prior density for $50$\,Hz and $100$\,Hz, respectively. We observe that the peak of the density function $\posteriorhat$ is closer to the true underlying value than the peak of $\prior$, and the variance reduces. Consequently, we obtain a sharper estimate for the impedance than the initial prior. Instead of only pointing out the single best value for the impedance fitting the data, we get a density plot that shows a whole domain of reasonable parameter combinations under the assumption that certain noise is present.

Note that for the real part, the peak of the density is lower than the expected value as it should be for the lognormal distribution. Thus, the expected value as a single parameter is rather insufficient to provide the best estimate for the impedance: the expected value and the standard deviation, or a fitted distribution, as we propose here, result in a more complete and informative estimate.

Our result can be optionally utilized in several ways to design further simulations (e.g., acoustic modeling of another room with a wall made from the same material, or a room with a set of acoustic obstacles inside it). If the aim is to use a single deterministic value of the estimated acoustic impedance that best fits the data, one should rather use the value that maximizes the likelihood. If the focus is in the further probabilistic parameter modeling (e.g., further Bayesian simulations), the estimated statistical moments and the fitted distribution can be used instead.
\begin{figure}[htbp]
	\centering
	\includegraphics[width=1\textwidth]{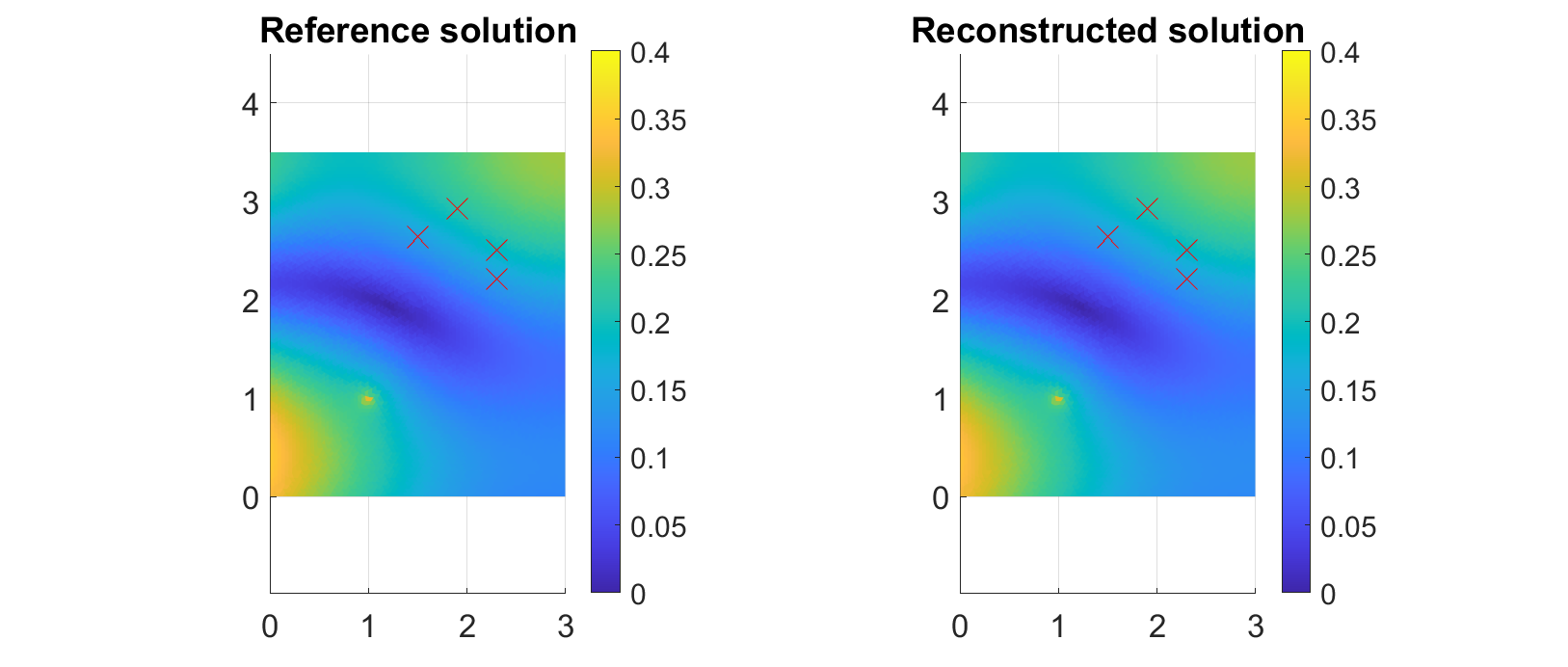}
	\caption{ Absolute value of the reference solution with $Z_{\mathrm{ref}}^{(1)} = 400-700i$ and $Z_{\mathrm{ref}}^{(2)}=500+800i$ (left) and the reconstructed solution with the most likely acoustic impedance value during the sampling process $Z^{(1)} \approx 427-697i $ and $Z^{(2)} \approx 583+860i $ (right) for $f=50$\,Hz. The blue dot shows the position of the source. The red marks are the measurement positions.}
	\label{fig:data2D}
\end{figure}
Figure \ref{fig:data2D} shows an example of solution plots with marked source and receiver positions for $f=50\,$Hz. The solution plot the data was generated from, i.e., the solution to problem \eqref{eq:model_problem2D} using the reference values $Z_{\mathrm{ref}}^{(1)} = 400-700i$ and $Z_{\mathrm{ref}}^{(2)}=500+800i$ in the left panel, looks very similar to the simulation with the estimated impedance value $\widehat{Z}$ in the right panel. The latter was chosen as the sample $Z^{i}$ for which the likelihood $\theta_{h}$ was largest, i.e., 
\begin{equation*}
\widehat{Z} \coloneqq \argmax\limits_{Z^{i}, i=1,\dots,N} \theta_{h}(Z^{i},y).
\end{equation*}
\subsection{3D problem with data from impedance problem}\label{sec:3Dimp}
We again consider the model problem \eqref{eq:model_problem2D}. However this time we consider a room in three dimensions with width 3\,m, length 3.5\,m and height 2.5\,m, i.e., $D \coloneqq [0,3]\times[0,3.5]\times[0,2.5]\subset{\mathbb{R}^{3}}$, $\Gamma_{R}^{(1)} = \{ (x_{1},x_{2},x_{3}) \in \partial D: x_{1} = 0\}$,  $\Gamma_{R}^{(2)} = \{ (x_{1},x_{2},x_{3}) \in \partial D: x_{2} = 0\}$ and $\Gamma_{N} = \partial D \backslash(\Gamma_{R}^{(1)}\cup \Gamma_{R}^{(2)})$, see figure \ref{fig:3D}. Here $Z^{(1)} = 500-800i$ is assumed to be known and only moments of an unknown $Z^{(2)}$ are computed. The point source $f=\delta^{\var}$ is located at $s=(1,1,1)\in D_{\kappa}$.
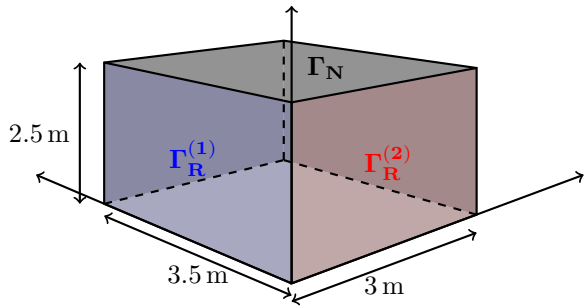
\begin{figure}[htbp]
\centering
\begin{tikzpicture}[scale=1.6]
\coordinate (P1) at (-7cm,1.5cm);
\coordinate (P2) at (8cm,1.5cm);

\coordinate (A1) at (0em,0cm);
\coordinate (A2) at (0em,-1.5cm);

\coordinate (A3) at ($(P1)!.78!(A2)$);
\coordinate (A4) at ($(P1)!.78!(A1)$);

\coordinate (A7) at ($(P2)!.81!(A2)$);
\coordinate (A8) at ($(P2)!.81!(A1)$);

\coordinate (A5) at (intersection cs: first line={(A8) -- (P1)}, second line={(A4) -- (P2)});
\coordinate (A6) at (intersection cs: first line={(A7) -- (P1)}, second line={(A3) -- (P2)});

\fill[gray!60] (A2) -- (A3) -- (A6) -- (A7) -- cycle;
\fill[gray!90] (A3) -- (A4) -- (A5) -- (A6) -- cycle;
\fill[gray!90] (A6) -- (A5) -- (A8) -- (A7) -- cycle;
\fill[blue!50, opacity=0.2] (A1) -- (A2) -- (A3) -- (A4) -- cycle;
\fill[gray!60, opacity=0.2] (A1) -- (A4) -- (A5) -- (A8) -- cycle;
\fill[red!50, opacity=0.2] (A2) -- (A7) -- (A8) -- (A1) -- cycle; 

\draw[thick,dashed] (A3) -- (A6);
\draw[thick,dashed] (A5) -- (A6);
\draw[thick,dashed] (A7) -- (A6);

\draw[thick] (A2) -- (A3) -- (A4) -- (A5) -- (A8) -- (A7) -- (A2) -- (A1) -- (A8);
\draw[thick] (A1) -- (A4);


\draw[thick,arrows=<->] ($(A3)-(0.2cm,0)$) -- node[left] {2.5\,m} ($(A4)-(0.2cm,0)$);
\draw[thick,arrows=<->] ($(A2)-(0,0.15cm)$) -- node[below] {3\,m} ($(A7)-(0,0.15cm)$);
\draw[thick,arrows=<->] ($(A3)-(0,0.1cm)$) -- node[below] {3.5\,m} ($(A2)-(0,0.1cm)$);

\node at (barycentric cs:A1=1,A2=1,A3=1,A4=1.2) {\textcolor{blue}{$\mathbf{\Gamma_{R}^{(1)}}$}};
\node at (barycentric cs:A1=1,A2=1,A7=1,A8=1.2) {\textcolor{red}{$\mathbf{\Gamma_{R}^{(2)}}$}};
\node at (barycentric cs:A1=1,A4=1,A5=1,A8=2) {$\mathbf{\Gamma_{N}}$};

\draw[thick,arrows=->] (A2) --  ($(P2)!.7!(A2)$) node[right] {$ $};
\draw[thick,arrows=->] (A2) --  ($(P1)!.7!(A2)$) node[left] {$ $};
\draw[thick,arrows=->] (A2) --  ($(A1)+(0,0.8cm)$) node[above] {$ $};
\end{tikzpicture}
\caption{Domain for model problem in 3D.}
\label{fig:3D}
\end{figure}

\begin{figure}[htbp]
\centering
\includegraphics[width=1\textwidth]{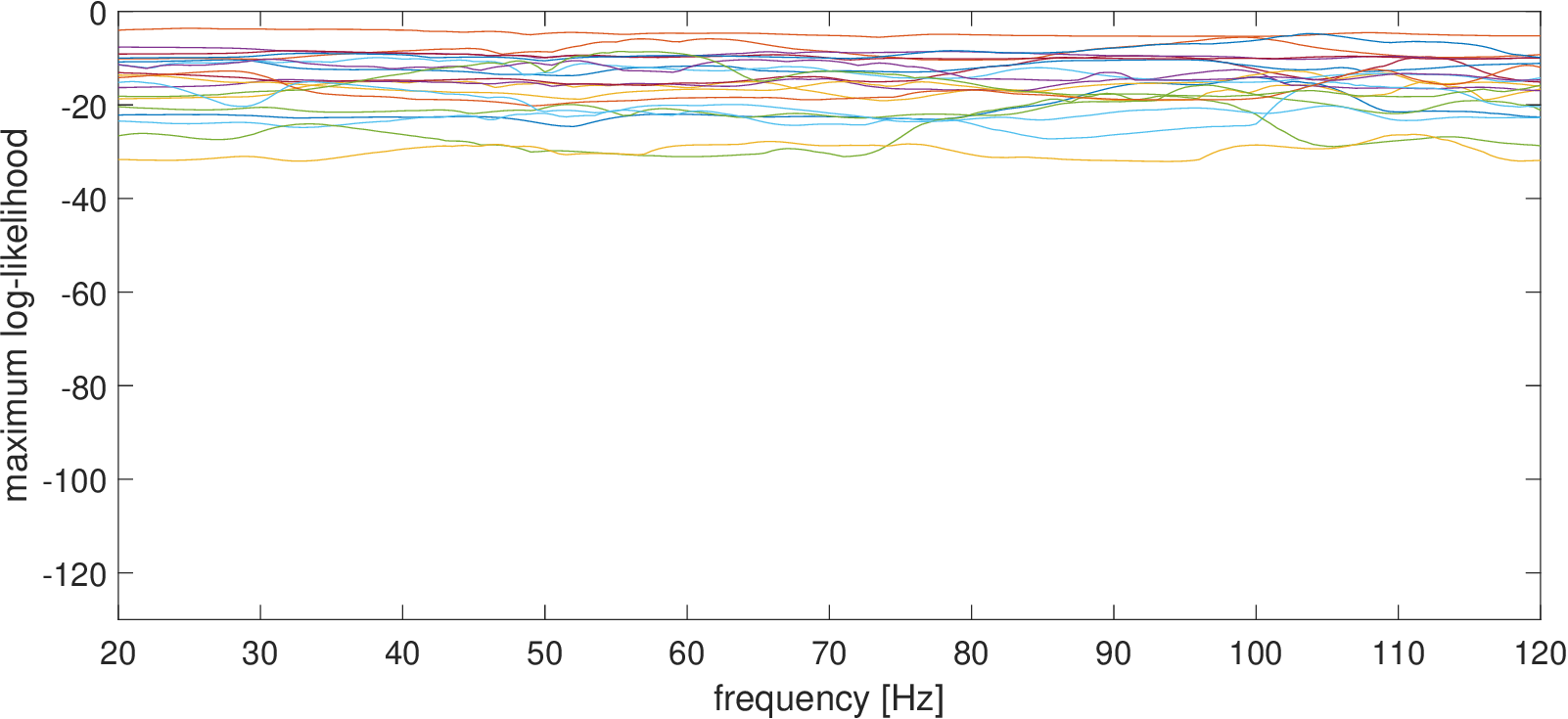}
\caption{Maximum log-likelihood for the impedance problem. The thin lines correspond to the individual runs.}
\label{fig:maxLikelihoodImpedance}
\end{figure}

\begin{figure}[htbp]
\centering
\includegraphics[width=1\textwidth]{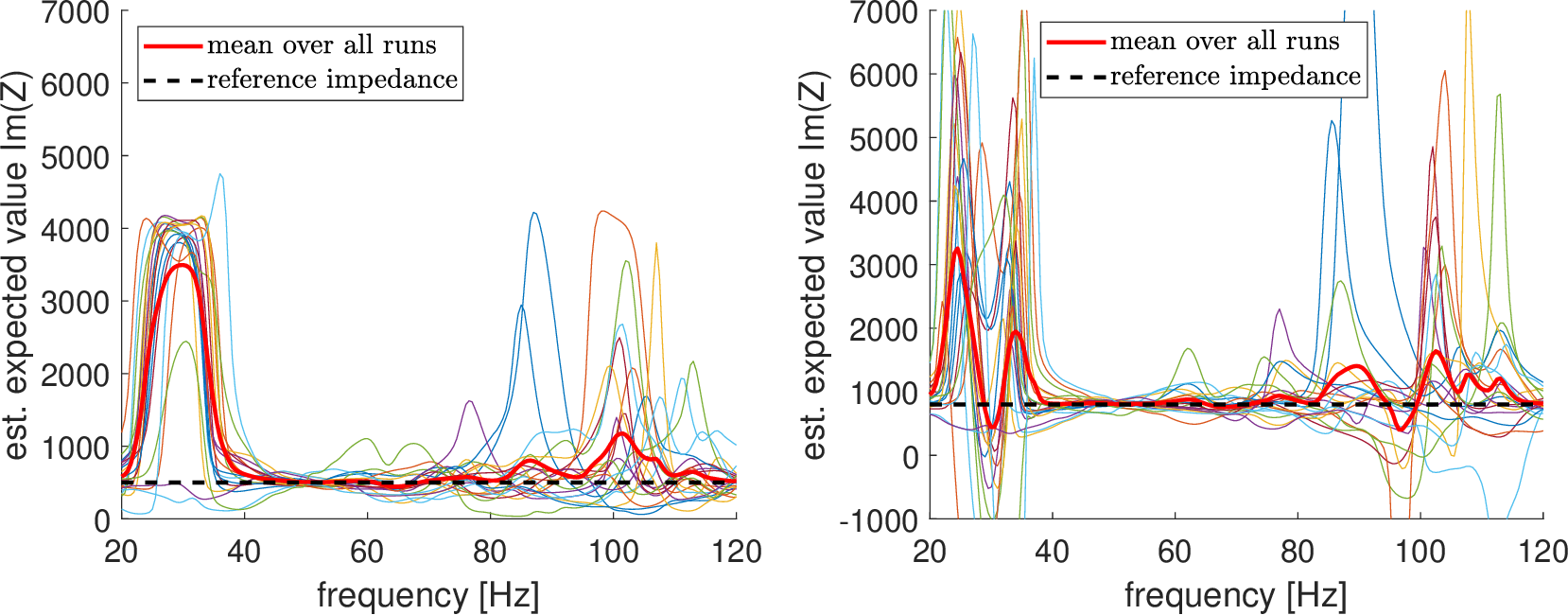}
\caption{Estimated expected value (real part left, imaginary part right) of the impedance for the impedance problem for all 20 runs. The black dashed line indicates the reference impedance value. The red line is the mean over all runs. Thin lines correspond to individual runs.}
\label{fig:estimatedMu}
\end{figure}

\begin{figure}[htbp]
\centering
\includegraphics[width=1\textwidth]{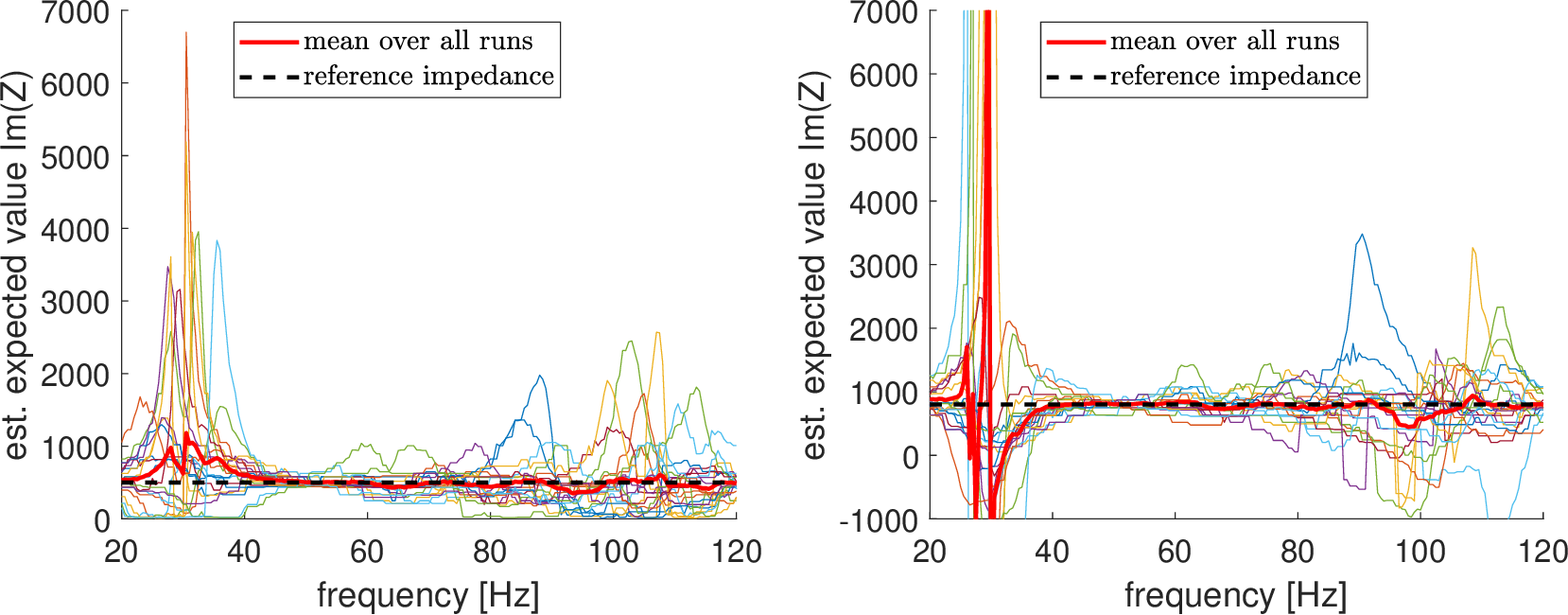}
\caption{Most likely impedance (real part left, imaginary part right) for the impedance problem  for all 20 runs. The black dashed line indicates the reference impedance value. The red line is the mean over all runs. Thin lines correspond to individual runs.}
\label{fig:MLreal}
\end{figure}

\begin{figure}[htbp]
\centering
\includegraphics[width=0.95\textwidth]{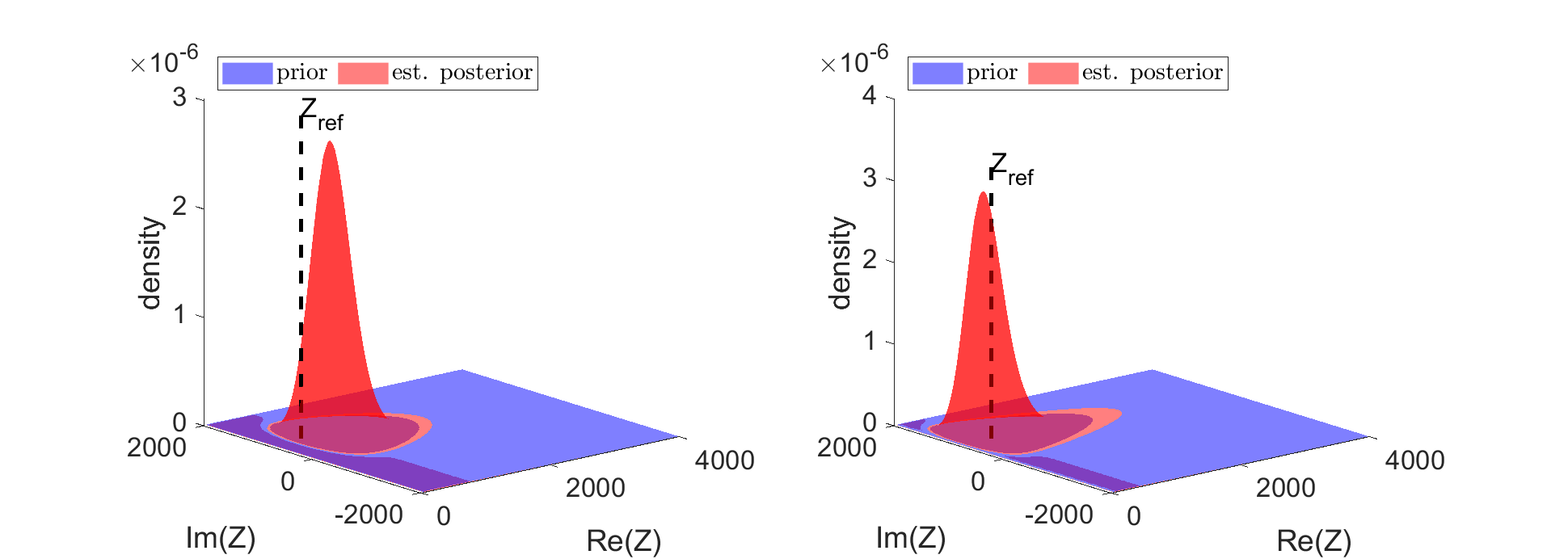}
\caption{Prior density and density given by estimated expected value and standard deviation for $f=25$\,Hz (left) and $f=95$\,Hz (right).}
\label{fig:density1}
\end{figure}

In the experiments the artificial measurements $y$ are generated by solving problem \eqref{eq:model_problem2D} with $Z^{(2)}=Z_{\mathrm{ref}}=500+800i$, evaluating the solution at $m=16$ measurement positions and adding noise, i.e., $y=\mathcal{G}(Z_{\mathrm{ref}})+\eta$, where $\eta \sim \mathcal{CN}(0,\Sigma,0)$. Here $\Sigma$ is a diagonal matrix with entries $\sigma_{k}^{2}=\sigma_{0}^{2}$, $k=1,\dots,2m$. The variance of the noise is given by $\sigma_{0}^2 = 0.02$. For each run   $m$ measurement positions are randomly placed in $M_{\kappa}$ with $\kappa = \frac{1}{2}$, i.e., in contrast to the 2D experiments in the previous section, the receiver positions and noise change from run to run. This allows us to observe behavior that is not depending on specific measurement positions. The prior distribution is chosen as described in section \ref{subsect:numPrior} with $\log Z_{R}^{(2)} \sim \mathcal{N}(4000,10000^2)$ and $Z_{I}^{(2)} \sim \mathcal{N}(0,30000^2)$.

For frequencies from $20$\,Hz to $120$\,Hz in steps of $0.5$\,Hz we estimate the expected value and variance for the posterior of $Z_{R}^{(2)}$ and $Z_{I}^{(2)}$ in the same way as outlined above. Here we take $N=2^{14}$ samples and choose the mesh width as $h=\min(\frac{c/f}{20},0.5)$. We observe that parameters with high likelihoods are detected. In figure \ref{fig:maxLikelihoodImpedance} we show the maximum of the log-likelihood achieved from samples during the Monte Carlo sampling, i.e., $\max \theta_{h}(Z^{i},y)$. The estimated expected values, i.e., $\widehat{\mathcal{M}}^1_y[Z_{*}^{(2)}]_{h,N}$, are close to the true underlying parameter values of $Z^{(2)}$ for most frequencies, see figure \ref{fig:estimatedMu}. In the lower frequency range of 20\,Hz to 40\,Hz we observe rather large fluctuations around the reference value. In this frequency range the solution of \eqref{eq:model_problem2D} seems to be less dependent on the acoustic impedance $Z^{(2)}$ and hence impedance values $Z^{(2)}$ with large magnitude still lead to high likelihood values. The samples $Z^i$ with largest likelihood, i.e., 
\begin{equation*}
\widehat{Z} \coloneqq \argmax\limits_{Z^{i}, i=1,\dots,N} \theta_{h}(Z^{i},y),
\end{equation*}
the best fitting parameter-pairs, are visualized in figure \ref{fig:MLreal}. For the low frequency range we observe that for some runs the impedance is estimated badly, however we see good approximation for most runs. The prior density $\prior$ together with the estimated posterior density $\posteriorhat$ given by the estimated parameters are visualized for two selected frequencies in figure \ref{fig:density1}. The prior density looks flat, since its variance is large. The estimated posterior, however, has smaller variance and leads to a suitable estimate of the reference impedance.

\subsection{3D problem with data from coupled acoustic-structural problem} \label{subsect:3d-nonlocal}
In this section we again consider the 3D version of the problem \eqref{eq:model_problem2D} as before, but this time the data was generated in COMSOL MULTIPHYSICS by solving a coupled acoustic-structural problem, where instead of an impedance boundary condition at $\Gamma_{R}^{(2)}$ a model of a glass wall is considered. In particular, this means that this part of the boundary is not locally reacting. 

The model to estimate moments of the posterior distribution remains the pure impedance problem as before.  Hence the model used to approximate the data are different from the one that is used to generate the data, and thus, leads to a significant \emph{model-data misfit}. In this section we investigate how well the data can be estimated by the locally reacting impedance model \eqref{IBC} and whether the inconsistency of the model and the data can be detected by the simulations. 
The remaining experimental setup including the domain, prior, noise, source position and microphone positions are the same as in section \ref{sec:3Dimp}. In fact, the same samples $Z^{i}$ drawn in the previous subsection and the corresponding observations $\mathcal{G}_{h}(Z^{i})$ can be recycled here again. Only the data $y$ changes and hence only the likelihood $\theta_{h}$ has to be evaluated again to compute the approximate moments in \eqref{eq:postMoments}.\\

Inspecting the maximum log-likelihood in the frequency range 20-120\,Hz in figure \ref{fig:maxLikelihoodCoupledWef}, we observe that for some frequencies its value gets very negative for \emph{all} samples $Z^{i}$ in each of the 20 runs. This is the consequence of the model-data misfit.
Nevertheless for some frequency ranges the algorithm detects well fitting impedance values from the parametric model.

\begin{figure}[htbp]
\centering
\includegraphics[width=0.95\textwidth]{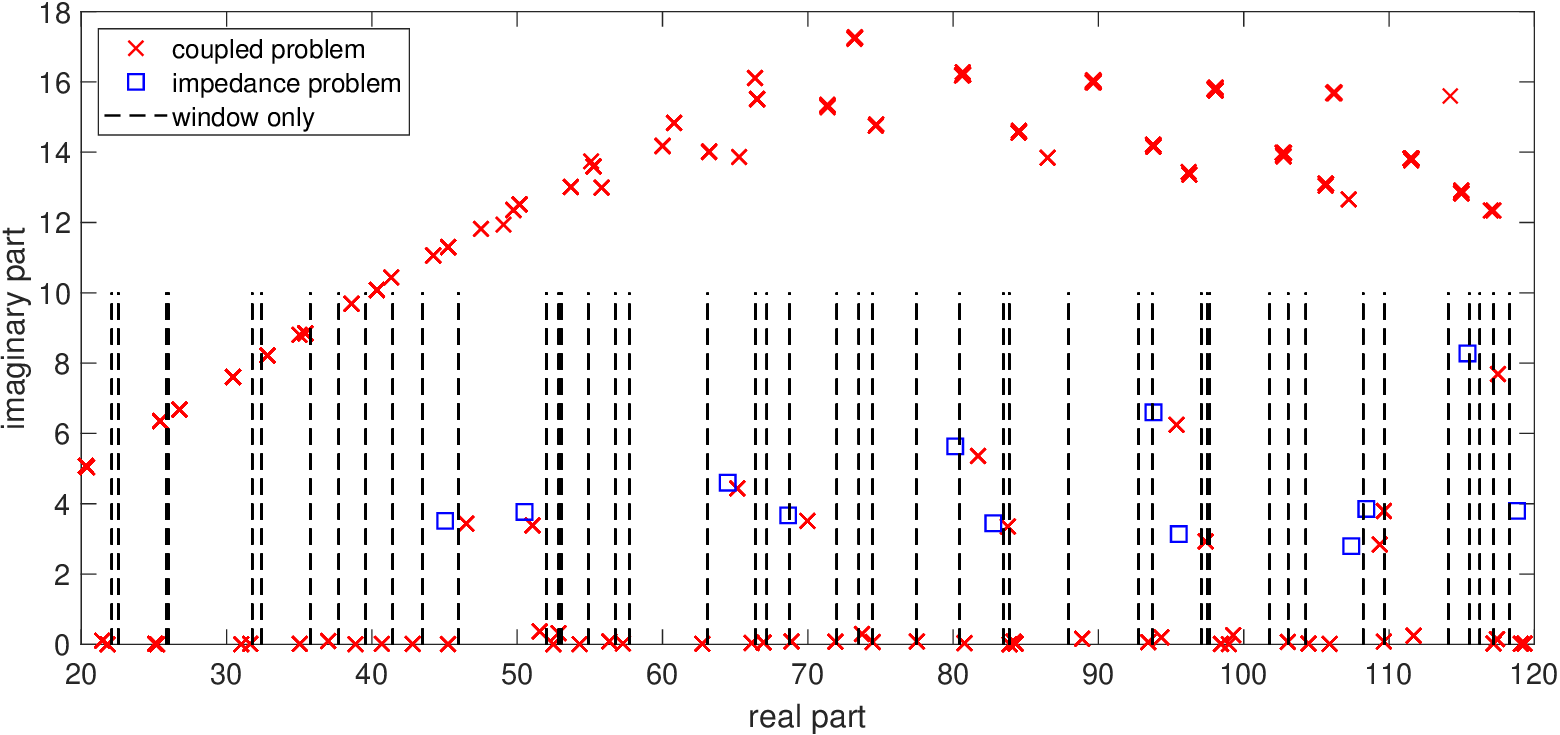}
\caption{Eigenfrequencies of coupled problem, pure impedance problem (sound hard at $\Gamma_{R}^{(2)}$) and for the glass wall (window) decoupled from the room acoustics (boundary conditions were set to be free at the large front and back surfaces and fixed at the edges of the window).}
\label{fig:ef}
\end{figure}
\begin{figure}[htbp]
\centering
\includegraphics[width=0.95\textwidth]{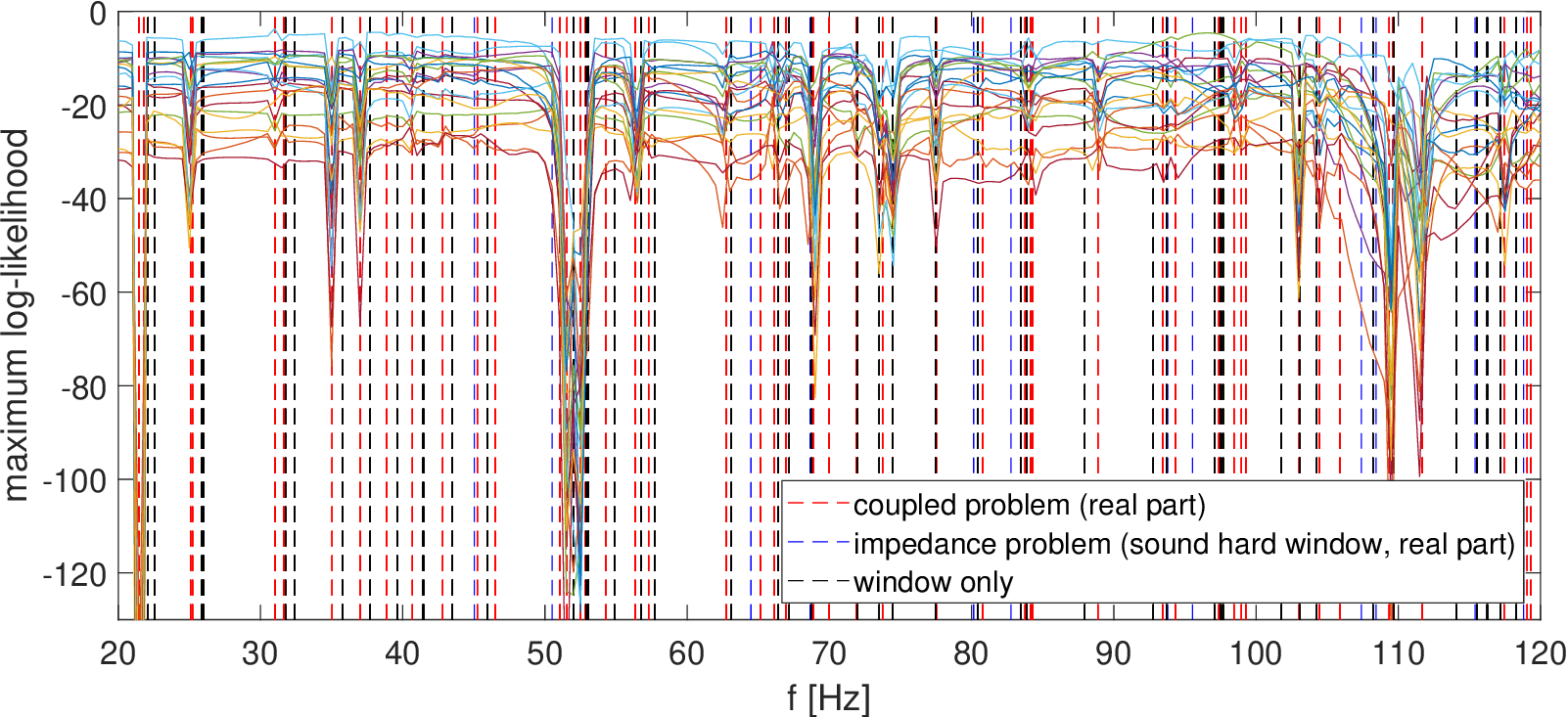}
\caption{Maximum log-likelihood for the coupled problem with eigenfrequencies (real part). The thin lines correspond to the individual runs.}
\label{fig:maxLikelihoodCoupledWef}
\end{figure}

To further study this behavior we compute the eigenfrequencies for the coupled problem, the glass wall itself and the impedance problem \eqref{eq:model_problem2D} (with sound hard $\Gamma_{R}^{(2)}$, i.e., for $\lvert Z^{(2)} \rvert \rightarrow \infty$). The eigenfrequencies are shown in figure \ref{fig:ef}. Notice, that for the glass wall the eigenfrequencies are real, but for the discretized coupled and impedance problem the eigenfrequencies are complex-valued with small imaginary part due to the boundary condition at $\Gamma_{R}^{(1)}$. Comparing the real part of the eigenfrequencies of the coupled problem with those from the glass wall itself we observe that their locations coincide quite well with only small shifts, in particular the real parts of eigenfrequencies of the impedance problem are only slightly smaller than the ones from the coupled problem. Comparing the eigenfrequencies with small imaginary part, i.e., those close to purely real frequencies, with the maximum likelihood, see figure \ref{fig:maxLikelihoodCoupledWef}, we observe the following: If the likelihood vanishes there are also eigenfrequencies nearby. If the frequency is far from any eigenfrequency the coupled problem behaves similar to the impedance problem. We also notice that for some eigenfrequencies the log-likelihood $\log\theta_{h}$ is still of a similar magnitude as in the case of section \ref{sec:3Dimp}. 

In Figures \ref{fig:data1}--\ref{fig:data3} we demonstrate the pressure (in dB) for different frequencies for the reference solution of the coupled problem (i.e., the solution from which the artificial data have been generated), and the solution of \eqref{eq:model_problem2D}, where the value of the impedance $Z^{(2)}$ is the sample having the highest likelihood $\theta_{h}$.
To illustrate three typical types of behaviour over the frequency range 20--120 Hz we focus on the three representative values
$f=51.5$\,Hz, $f=54.5$\,Hz and $f=69$\,Hz. As seen from figure \ref{fig:maxLikelihoodCoupledWef}, these frequencies are close to the eigenfrequencies of the coupled problem. In general, we observe  that for some eigenfrequencies the solution in the interior is much more affected by the behaviour of the glass wall than for others. 

For $f=51.5$\,Hz a very low maximum log-likelihood was observed for all runs, see figure \ref{fig:maxLikelihoodCoupledWef}. In the left panel of figure \ref{fig:data1} we can clearly see the eigenmodes at the glass wall and that the reference solution in the interior of the domain is affected by its behaviour. In contrast, the solution plot for the locally reacting impedance boundary condition for the likelihood maximizer does not show oscillations for this frequency value, see the right panel of figure \ref{fig:data1}. Therefore, the simulation is not capable to fit the data and the model-data misfit can be clearly detected from the low values of the likelihood. 

For $f=54.5$\,Hz we again observe the eigenmodes on the glass wall for the reference solution in the left panel of figure \ref{fig:data2}. However this time the algorithm is able to determine impedance parameters that fit the data well and result in high values of the likelihood. A closer inspection of the reference solution and the likelihood maximizer in the left and right panels of figure \ref{fig:data2} show the similar structure in the interior of the domain, whereas the boundary behaviour is quite different. Here, the glass wall eigenmode is quite localized near the boundary, so that the oscillations cannot be detected by the microphones located in the interior of the domain. In this case, the algorithm does not detect the model-data misfit.

\begin{figure}[htbp]
\centering
\includegraphics[width=1\textwidth]{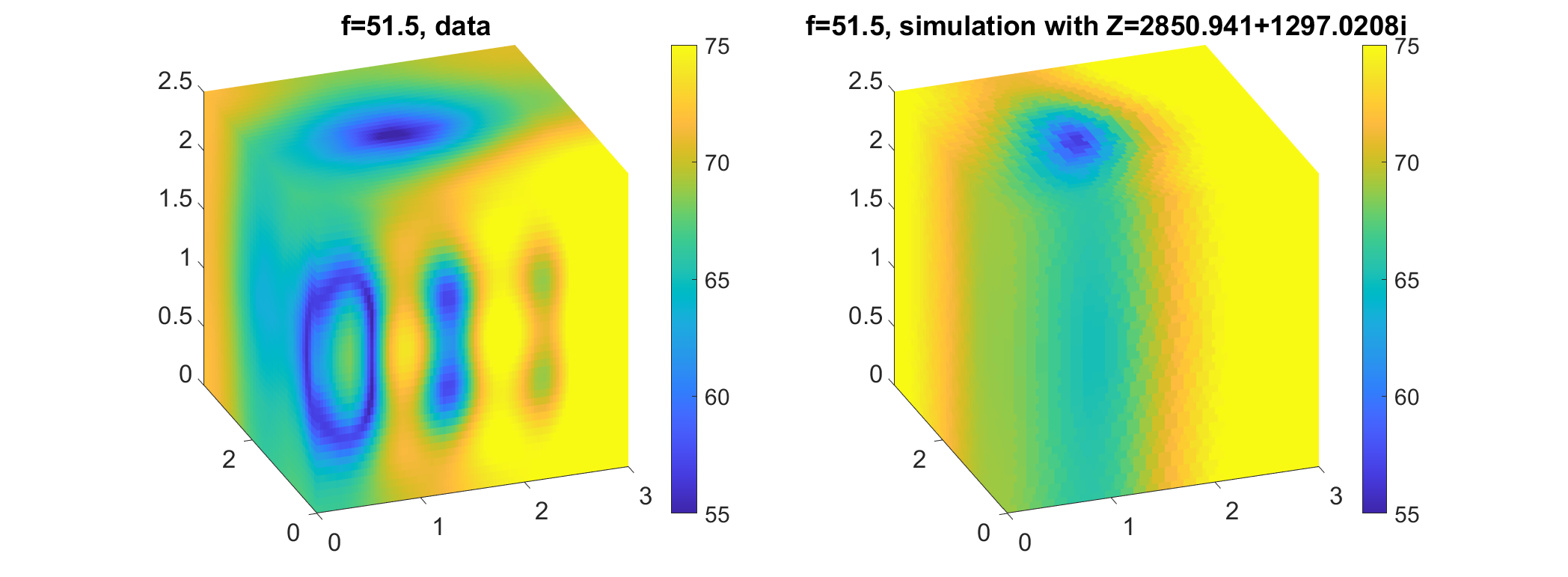}
\caption{The reference solution that produces artificial measurement data (left) and the likelihood maximizer computed by the algorithm (right) in dB (re 1\,Pa$\cdot$s/m$^{3}$) for $f=51.5$\,Hz.}
\label{fig:data1}
\end{figure}
\begin{figure}[htbp]
\centering
\includegraphics[width=1\textwidth]{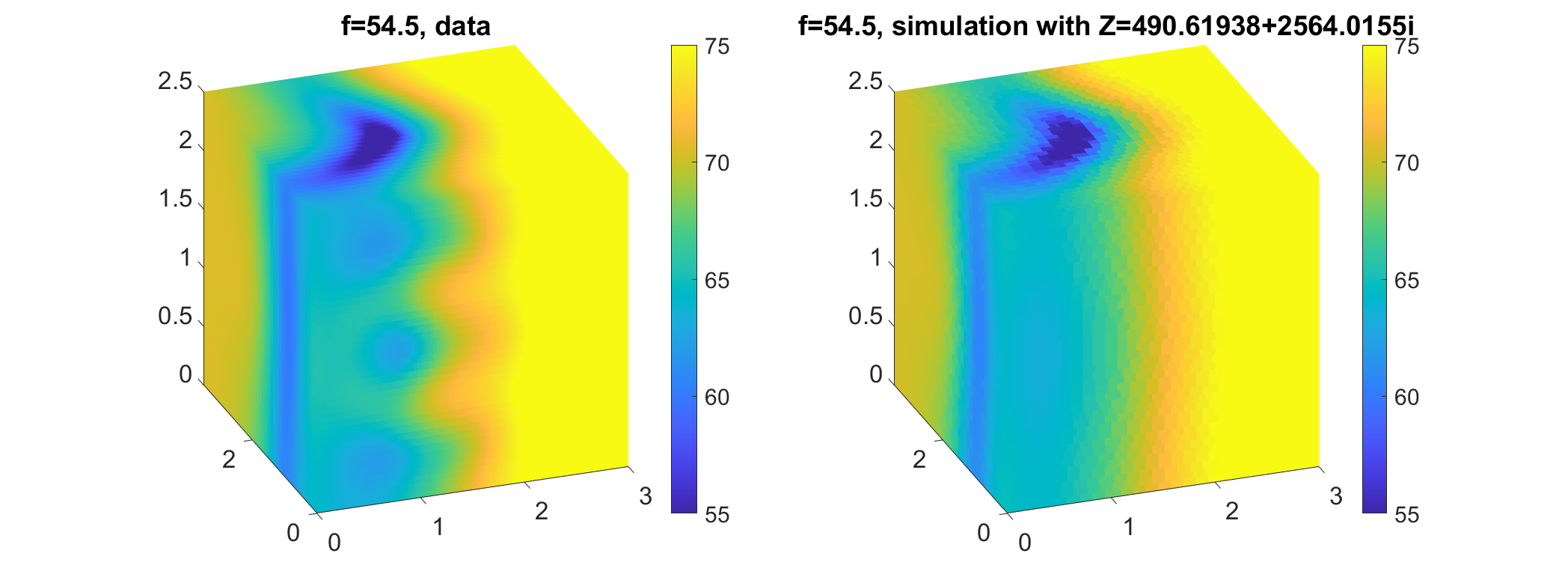}
\caption{The reference solution that produces artificial measurement data (left) and the likelihood maximizer computed by the algorithm (right) in dB (re 1\,Pa$\cdot$s/m$^{3}$) for $f=54.5$\,Hz.}
\label{fig:data2}
\end{figure}
\begin{figure}[htbp]
\centering
\includegraphics[width=1\textwidth]{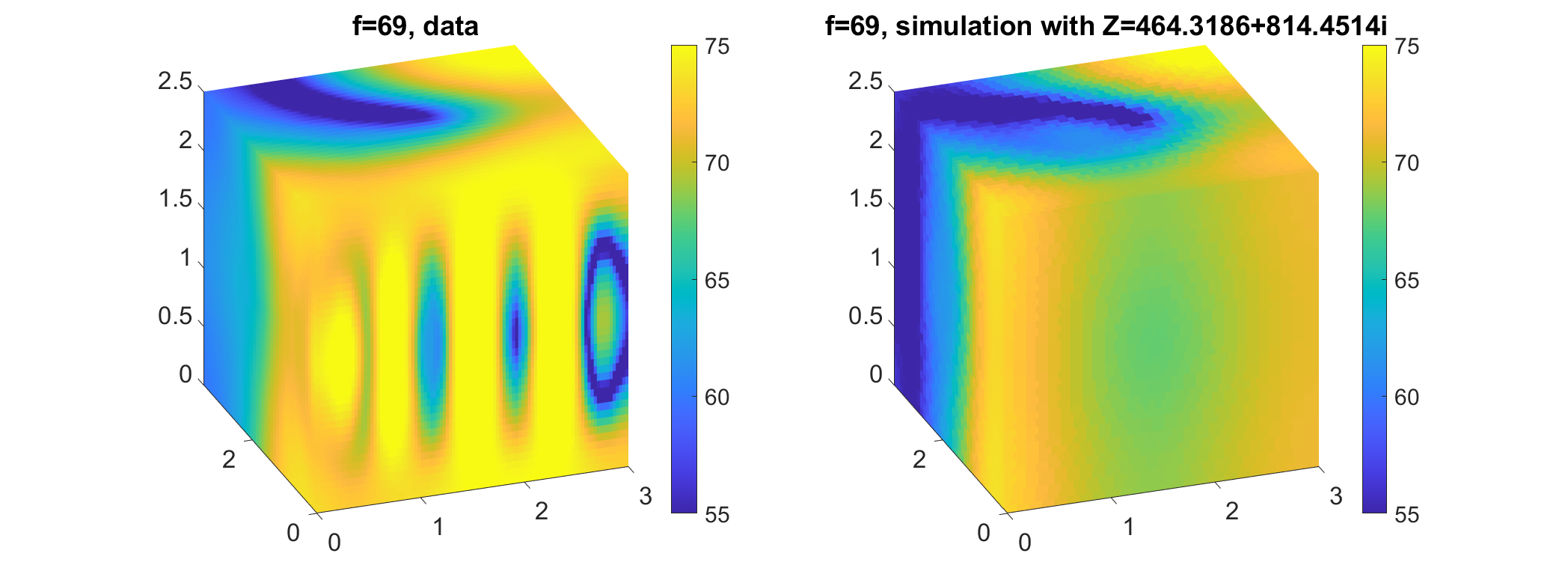}
\caption{The reference solution that produces artificial measurement data (left) and the likelihood maximizer computed by the algorithm (right) in dB (re 1\,Pa$\cdot$s/m$^{3}$) for $f=69$\,Hz.}
\label{fig:data3}
\end{figure}

In contrast to the two typical cases discussed above, the frequency $f=69$\,Hz exhibits an intermediate behaviour, see figure \ref{fig:data3}. The eigenmode of the glass wall is again quite localized near the boundary. The algorithm, however, fails to identify the impedance parameter with the same likelihood as for $f=54.5$\,Hz. In figure \ref{fig:maxLikelihoodCoupledWef} we see, however, that the likelihood drop is not so dramatic as around $f=51.5$\,Hz. As a consequence, we can see clear differences in the solution close to~$\Gamma^{(2)}$, but a more similar qualitative (but not quantitative!) behaviour for parts away from this part of the boundary. 

\section{Outlook}
\label{sec:outlook}
From this point a lot of other extensions are possible. For once, estimating higher moments for the posterior density and computing parametric density functions with those is a way to include properties like skewness in the resulting function.

Another possible extension is to replace the scalar-valued parametric impedance  \linebreak boundary condition by a more general parametric model, e.g., represented by a Karhunen--Lo\'eve expansion, or a non-local boundary condition. In this case a lot more parameters would need to be estimated, depending on the goal.\\

Another thread is the analysis of further intelligent sampling methods like multilevel Monte Carlo or Quasi Monte Carlo (as done in \cite{scheichl2017quasi} for the elliptic problem) or Markov Chain / sequential Monte Carlo method (as done in \cite{engel2019application} for uncertainty in the right hand side of the Helmholtz equation).

\section*{Acknowledgments}
This research was partially funded by the Deutsche For\-schungs\-gemeinschaft (DFG, German
Research Foundation) – Project IDs 352015383 (SFB 1330 C1) and 
444832396 (SPP 2236).


\bibliographystyle{siamplain}
\bibliography{references}

\end{document}